\newtheorem{theorem}{Theorem}[section]
\newtheorem{lemma}[theorem]{Lemma}
\newtheorem{question}[theorem]{Question}
\newtheorem{proposition}[theorem]{Proposition}
\newtheorem{corollary}[theorem]{Corollary}
\numberwithin{equation}{section}
\theoremstyle{remark}
\newtheorem*{remark}{Remark}
\DeclareMathOperator{\N}{\mathbb{N}}
\def\reals{\hbox{\rm I\kern-.18em R}}
\def\complexes{\hbox{\rm C\kern-.43em
\vrule depth 0ex height 1.4ex width .05em\kern.41em}}
\def\field{\hbox{\rm I\kern-.18em F}} 
\begin{document}

\title[Sum of a prime and a square-free number with divisibility conditions]{On the sum of a prime and a square-free number with divisibility conditions}

\author{Shehzad Hathi and Daniel R. Johnston}
\address{School of Science, The University of New South Wales, Canberra, Australia}
\email{s.hathi@student.adfa.edu.au}
\address{School of Science, The University of New South Wales, Canberra, Australia}
\email{daniel.johnston@student.adfa.edu.au}
\date\today
\keywords{}

\begin{abstract}
    Every integer greater than two can be expressed as the sum of a prime and a square-free number. Expanding on recent work, we provide explicit and asymptotic results when divisibility conditions are imposed on the square-free number. For example, we show for odd $k\leq 10^5$ and even $k\leq 2\cdot 10^5$ that any even integer $n\geq 40$ can be expressed as the sum of a prime and a square-free number coprime to $k$. We also discuss applications to other Goldbach-like problems.
\end{abstract}

\maketitle

\section{Introduction}\label{sectintro}
\subsection{Motivation}
A standard variant of Goldbach's conjecture is to consider representations of integers $n$ as 
\begin{equation*}
    n=p+\eta
\end{equation*}
where $p$ is a prime and $\eta$ belongs to a set $S$ containing the primes.\\
In the case where $S$ is the set of primes and semiprimes\footnote{Here, a semiprime refers to a positive integer with exactly two prime factors.}, we have the following theorem due to Chen.
\begin{theorem}[Chen]\label{chenthm}
    Every sufficiently large even integer can be represented as the sum of two primes, or a prime and a semiprime.
\end{theorem}
Recently, Chen's theorem has been shown to hold for $n\geq\exp(\exp(32.6))$ (see \cite{BJV22}), and is conjectured to hold for all $n\geq 4$. The most extensive verification of Chen's theorem to date is due to Oliveira e Silva, Herzog and Pardi \cite{e2014empirical} who verified Goldbach's conjecture (and thus Chen's Theorem) for $n\leq 4\cdot 10^{18}$. Given that this verification took 770 one-core CPU years to finish, verifying Chen's Theorem for all even $n\leq\exp(\exp(32.6))\approx 10^{4\cdot 10^{14}}$ seems computationally impractical at present.

In 2017, Dudek \cite{dudek2017sum} proved the following theorem, which makes explicit an earlier result of Estermann \cite[Theorem 3]{estermann1931representations}.
\begin{theorem}[Dudek]\label{dudekthm}
    Every integer greater than 2 can be represented as the sum of a prime and a square-free number.
\end{theorem}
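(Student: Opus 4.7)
The plan is to count representations
\[
r(n) := \#\{(p,s) : n = p + s,\ p \text{ prime},\ s \text{ square-free}\},
\]
and show $r(n) \geq 1$ for every integer $n > 2$. Using the standard identity $\mu^2(m) = \sum_{d^2 \mid m} \mu(d)$, one rewrites
\[
r(n) = \sum_{p \leq n-1} \mu^2(n-p) = \sum_{d \leq \sqrt{n-1}} \mu(d)\, \pi(n-1; d^2, n),
\]
where $\pi(x;q,a)$ denotes the number of primes $p \leq x$ with $p \equiv a \pmod{q}$. The aim is then to isolate a positive main term of size $\asymp \pi(n)$ from this alternating sum.

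I would split the sum over $d$ by size. For small $d$ coprime to $n$, an explicit prime number theorem in arithmetic progressions yields
\[
\pi(n-1; d^2, n) = \frac{\pi(n-1)}{\phi(d^2)} + E(n,d),
\]
whereas for $d$ sharing a prime factor with $n$, every prime $p \equiv n \pmod{d^2}$ must itself divide $n$, contributing at most one term per $d$. Extending the main part to a sum over all squarefree $d$ coprime to $n$ produces the expected main contribution
\[
\pi(n-1) \sum_{\substack{d \geq 1 \\ \gcd(d,n)=1}} \frac{\mu(d)}{\phi(d^2)} = \pi(n-1) \prod_{p \nmid n}\left(1 - \frac{1}{p(p-1)}\right),
\]
which is bounded below by a positive absolute constant times $\pi(n)$ since the Euler product converges and is uniformly positive across all $n$.

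For medium $d$ I would apply a Brun--Titchmarsh-type inequality of the form $\pi(x;q,a) \ll x/(\phi(q)\log(x/q))$ to bound the contribution, and for $d$ close to $\sqrt{n}$ the trivial estimate $\pi(n;d^2,n) \leq n/d^2 + 1$ is sufficient. The main obstacle is showing that the combined error from all three ranges is strictly smaller than the main term: because the main term is only of size $\pi(n) \asymp n/\log n$, this forces a careful choice of the two split points together with sharp explicit constants in the PNT in arithmetic progressions, rather than soft $o$-notation. Once the asymptotic analysis gives $r(n) \geq 1$ for all $n$ beyond some explicit threshold $N_0$, the remaining finitely many cases $2 < n \leq N_0$ are handled by a direct numerical verification.
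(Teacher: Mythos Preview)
Your proposal is correct and follows essentially the same route as the paper (which in turn follows Dudek): the paper does not give a separate proof of Theorem~\ref{dudekthm} but its Lemma~\ref{delemma} with $d=e=1$ is precisely this argument, with the same three-range split into explicit PNT-in-AP bounds, Brun--Titchmarsh, and the trivial estimate. The only cosmetic difference is that the paper works with the log-weighted $\theta(n;m,n)$ rather than your unweighted $\pi(n;m,n)$, which slightly changes the shape of the error terms but not the method.
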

For sufficiently large even $n$, Dudek's theorem is weaker than Chen's Theorem. However, Dudek's theorem has the advantage of being completely explicit, in the sense that we know every value of $n$ for which it holds.

Motivated by theorems \ref{chenthm} and \ref{dudekthm}, we investigate the following questions.

\begin{question}\label{squarefreeq}
    What happens if we place divisibility conditions on the square-free number in Dudek's theorem? That is, if $k$ is a fixed integer, can we find an $n_0$ such that every $n\geq n_0$ can be expressed as the sum of a prime and a square-free number coprime to $k$? 
\end{question}

\begin{question}\label{chenq}
    How much does one need to weaken Chen's theorem in order to gain a completely explicit result? In particular, can we find a $K>2$ such that every even integer $n\geq 4$ can be expressed as the sum of a prime and a number with at most $K$ prime factors? 
\end{question}

\subsection{Statement of main results}\label{statementres}
In response to Question \ref{squarefreeq}, we prove the following results.
\begin{theorem}\label{firsteventhm}
    Let $k$ be even with $2\leq k\leq 2\cdot 10^5$ or odd with $1\leq k\leq 10^5$. Then every even $n\geq 40$ can be expressed as the sum of a prime and a square-free number $\eta>1$ with $(\eta,k)=1$.
\end{theorem}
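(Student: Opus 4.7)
The strategy is to count the number of valid representations
$$R_k(n) := \#\{(p, \eta) : p \text{ prime},\; \eta > 1 \text{ squarefree},\; (\eta, k) = 1,\; n = p + \eta\}$$
and show $R_k(n) \geq 1$ for every $n$ in the target range. Since the coprimality condition depends only on the radical of $k$, we may assume $k$ is squarefree. Combining the M\"obius identities $\mu^2(\eta) = \sum_{d^2 \mid \eta} \mu(d)$ and $\mathbf{1}_{(\eta,k)=1} = \sum_{e \mid (\eta, k)} \mu(e)$ and observing that any prime common to $d$ and $k$ contributes nothing (it would force $(\eta, k) > 1$, so we may restrict to $(d,k)=1$, in which case $d^2e = [d^2,e]$), we arrive at
$$R_k(n) \;=\; \sum_{\substack{d \geq 1,\; e \mid k \\ (d, k) = 1,\; d \text{ sqfree}}} \mu(d)\,\mu(e)\, \pi\bigl(n-2;\, d^2 e,\, n \bmod d^2 e\bigr),$$
so the problem reduces to counting primes in arithmetic progressions with squarefree modulus $d^2 e$.

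The next step is to separate this sum into a main term and an error. Truncate at some $D \leq \sqrt{n-2}$: for $d \leq D$, apply an explicit form of the prime number theorem in arithmetic progressions (e.g.\ from Bennett--Martin--O'Bryant--Rechnitzer or Ramar\'e--Rumely) to replace $\pi(x; q, a)$ with $\operatorname{li}(x)/\varphi(q)$ at the cost of a controlled error; for $d > D$, use the trivial bound $\#\{p \leq n-2 : d^2 e \mid n - p\} \leq (n-2)/(d^2 e) + 1$. The main term then factors, using $\varphi(d^2 e) = d\,\varphi(d)\varphi(e)$ for $d$ squarefree with $(d,e)=1$, into $\operatorname{li}(n-2) \cdot \mathfrak{S}_k(n)$ where
$$\mathfrak{S}_k(n) \;=\; \prod_{\substack{p \mid k \\ p \nmid n}} \frac{p - 2}{p - 1} \cdot \prod_{p \nmid k n}\!\left(1 - \frac{1}{p(p - 1)}\right)$$
is a strictly positive singular series; the potentially vanishing factor at $p = 2$ would only appear if $2 \mid k$ and $2 \nmid n$, impossible for the even $n$ in the statement.

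Assembling these estimates gives an explicit inequality of the form $R_k(n) \geq \operatorname{li}(n-2)\, \mathfrak{S}_k(n) - E(n, k) > 0$ valid for all $n \geq N_0(k)$. The principal obstacle will be making the analysis tight enough that $N_0(k)$ remains small---uniformly across the $k$-ranges in the statement---for the residual window $40 \leq n < N_0(k)$ to be within computational reach. The verification itself is conceptually straightforward: for each admissible $k$ and each even $n \in [40, N_0(k)]$, iterate over primes $p \leq n - 2$ and test whether $n - p$ is both squarefree and coprime to $k$. Sieving the squarefree indicator once and recycling it across the many values of $k$ (and similarly exploiting that the prime list is independent of $k$) will be crucial for the run-time to fit into the ranges $k \leq 10^5$ (odd) and $k \leq 2\cdot 10^5$ (even).
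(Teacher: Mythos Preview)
Your analytic framework is essentially the paper's: a M\"obius decomposition of the squarefree and coprimality indicators, followed by explicit prime-counting bounds in arithmetic progressions (Bennett--Martin--O'Bryant--Rechnitzer for small moduli, Brun--Titchmarsh and trivial bounds for larger ones), yielding a positive singular series main term minus a controllable error. Your parametrisation $(d,e)$ with $(d,k)=1$ and $e\mid k$ differs cosmetically from the paper's $(d,e,a)$ with $e\mid d\mid k$ and $(a,d)=e$, but the two unwind to the same thing.

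The genuine gap is in the verification step. With the available explicit constants for $\theta(x;q,a)$, the analytic inequality only becomes positive around $n\geq 4\cdot 10^{18}$ (this is exactly the threshold the paper obtains), and there is no realistic prospect of pushing $N_0(k)$ down to something like $10^8$ uniformly in $k$. Your proposed brute-force check---for each of the $\sim 10^5$ values of $k$, loop over all even $n$ in $[40,N_0(k)]$ and search for a valid prime---is therefore off by many orders of magnitude from feasibility. The paper sidesteps this entirely with a trick you have not mentioned: for even $n\leq 4\cdot 10^{18}$ one invokes the verified Goldbach conjecture (Oliveira e Silva--Herzog--Pardi), which gives $n=p_1+p_2$ with both $p_i$ prime, hence squarefree. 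This already furnishes a representation $n=p_1+\eta$ with $\eta=p_2$ squarefree and coprime to $k$, \emph{unless} $p_2\mid k$; by symmetry the only $n$ not immediately handled are those of the form $q_1+q_2$ with $q_1,q_2$ both prime divisors of $k$. For each $k\leq 2\cdot 10^5$ there are at most a few dozen such $n$, all below $4\cdot 10^5$, and these are checked directly. That reduction from $\sim 10^{18}$ candidate $n$ per $k$ to a handful is the crux of the even case, and without it your proposal does not close.
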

\begin{theorem}\label{firstoddthm}
    Let $k$ be odd with at most 2 prime factors and $1\leq k\leq 10^5$. Then every $n\geq 36$ can be expressed as the sum of a prime and a square-free number $\eta>1$ with $(\eta,k)=1$.
\end{theorem}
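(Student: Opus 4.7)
The plan is to mirror the approach used for Theorem~\ref{firsteventhm}, extended from even to all $n$. Let $R_k(n)$ denote the number of admissible representations $n = p + \eta$ with $p$ prime, $\eta > 1$ squarefree, and $(\eta,k)=1$. The goal is to establish an explicit lower bound $R_k(n) > 0$ valid for all $n \geq n_0(k)$, after which the finite range $36 \leq n < n_0(k)$ is verified by direct computation for each admissible $k$.

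The starting point is to write the squarefree indicator as $\sum_{d^2 \mid m} \mu(d)$ and the coprimality indicator as $\sum_{e \mid (m,k)} \mu(e)$. Swapping orders of summation (and noting that terms with $(d,k) > 1$ vanish, since $d^2 \mid n-p$ would then force a common factor with $k$) and combining the two congruence conditions $p \equiv n \pmod{d^2}$ and $p \equiv n \pmod{e}$ via the Chinese Remainder Theorem, one obtains
\[
R_k(n) = \sum_{\substack{d \leq \sqrt{n-2} \\ (d,k)=1}} \mu(d) \sum_{e \mid k} \mu(e)\, \pi\!\left(n-2;\, d^2 e,\, n\right) + O(1),
\]
where $\pi(x;q,a)$ counts primes $p \leq x$ with $p \equiv a \pmod{q}$ and the $O(1)$ accounts for the boundary case $\eta = 1$.

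I would then fix a cutoff $Y$ and split the $d$-sum. For $d \leq Y$, plug in an explicit prime number theorem in arithmetic progressions (for instance the bounds of Bennett et al.) to isolate a main term and a quantitatively controlled error; after reassembling, the main term contributes approximately $n$ times a singular series involving the squarefree density $6/\pi^2$ and local factors at the primes dividing $k$. For $Y < d \leq \sqrt{n-2}$, apply the Brun--Titchmarsh inequality
\[
\pi(n-2;\, d^2 e,\, n) \leq \frac{2(n-2)}{\varphi(d^2 e)\, \log\!\bigl((n-2)/(d^2 e)\bigr)},
\]
and sum over $d$ and $e \mid k$. The hypothesis that $k$ has at most two prime factors bounds the $e$-sum to at most four terms, which is essential for keeping the aggregate error small enough that the lower bound is strictly positive.

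The main obstacle is pushing the threshold $n_0(k)$ low enough that the residual computer check is feasible uniformly over all admissible $k \leq 10^5$. Unlike the even-$n$ case, where $p$ must be odd and this interacts cleanly with the parity of the modulus $d^2 e$, for odd $n$ one must accommodate either $p = 2$ or $\eta$ even, and the moduli $d^2 e$ can take either parity; this demands a more careful explicit bound at the small primes and is likely the reason for the stronger restriction $\omega(k) \leq 2$ here compared to Theorem~\ref{firsteventhm}. Once the analytic lower bound is established, a routine search over the remaining pairs $(k,n)$ closes the argument.
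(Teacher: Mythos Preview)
Your overall architecture---an explicit lower bound for $R_k(n)$ valid past some threshold, followed by a finite check---matches the paper. But you have misidentified where the hypothesis $\omega(k)\le 2$ actually enters, and in doing so you have missed the one genuinely nontrivial idea in the proof.

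You claim that $\omega(k)\le 2$ is needed to keep the analytic error terms small (``at most four terms'' in the $e$-sum). This is not the bottleneck: the paper's Lemma~\ref{oddcomp} and Theorem~\ref{bigoddthm} show that the same analytic machinery gives $\overline{R}_k(n)>0$ for \emph{all} odd square-free $k\le 10^5$ once $n\ge 10^{25}$, with no restriction on $\omega(k)$. With $\omega(k)\le 2$ the threshold drops to $8\cdot 10^9$, but that is still far too large for the ``routine search over the remaining pairs $(k,n)$'' you propose: there are on the order of $10^4$ admissible $k$ and $8\cdot 10^9$ values of $n$, so a pair-by-pair check is infeasible.

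The missing idea is a pigeonhole trick that decouples the verification from $k$ entirely. For each $n$ one searches for \emph{three} representations $n=p_i+\eta_i$ with $(\eta_i,\eta_j)\le 2$ for all $i\neq j$. If $k$ is odd with at most two prime factors and no $\eta_i$ were coprime to $k$, then each $\eta_i$ would share an odd prime with $k$; by pigeonhole two of them would share the \emph{same} odd prime, contradicting $(\eta_i,\eta_j)\le 2$. Thus a single pass over $n\le 8\cdot 10^9$ certifies the result for every admissible $k$ simultaneously. This is precisely why the restriction $\omega(k)\le 2$ appears---it is what makes the computational half tractable, not the analytic half. (As a minor point, your main-term constant should be Artin's constant $c=\prod_p(1-1/(p(p-1)))\approx 0.374$, not $6/\pi^2$.)
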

These results improve on recent work of Francis and Lee \cite{francis2020additive}, who only considered the case where $k$ is prime.

Importantly, we note that Theorem \ref{firsteventhm} holds only for even $n$ whereas Theorem \ref{firstoddthm} holds for both odd and even $n$. If we remove the restriction on the prime factors of $k$ in Theorem \ref{firstoddthm} then the result holds for all $n\geq 10^{25}$ (see Theorem \ref{bigoddthm}).

In addition, we also provide asymptotic formulas for the (logarithmically-weighted) number of representations of an integer $n$ as the sum of a prime and a square-free number coprime to $k$ (see Theorem \ref{asympthm}).

The main focus of this paper will be on proving Theorems \ref{firsteventhm} and \ref{firstoddthm}. 

An answer to Question \ref{chenq} has been deferred to follow up work by the second author and V. Starichkova \cite{johnstonsome22}. In particular, we have the following result.

\begin{theorem}[{\cite[Theorem 1.3]{johnstonsome22}}]\label{kthm}
    Every even integer $n\geq 4$ can be expressed as the sum of a prime and a number with at most $K=369$ prime factors.
\end{theorem}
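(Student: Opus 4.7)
The plan is to invoke Theorem \ref{firsteventhm} with $k$ taken to be the largest even primorial not exceeding $2\cdot 10^5$, namely
\[
k_0 \;=\; 2\cdot 3\cdot 5\cdot 7\cdot 11\cdot 13 \;=\; 30030.
\]
For every even $n\ge 40$, Theorem \ref{firsteventhm} with $k=k_0$ then furnishes a decomposition $n=p+\eta$ with $\eta>1$ square-free and $\gcd(\eta,30030)=1$, so that every prime divisor of $\eta$ is at least $17$.

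Since $\eta$ is square-free, $\Omega(\eta)=\omega(\eta)$, and combining the minimal prime factor estimate with $\eta\le n$ yields $17^{\omega(\eta)}\le \eta\le n$, hence
\[
\omega(\eta) \;\le\; \frac{\log n}{\log 17}.
\]
This quantity is at most $e^{33}$ across the enormous range $n\le 17^{e^{33}}$, which dwarfs any effective Chen-type threshold. The remaining small cases $4\le n\le 38$ form a finite list to be verified by hand (for instance $4=2+2$ with $\omega(\eta)=1$, $6=3+3$, and so on), each producing an $\eta$ with a very small number of prime factors.

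For the asymptotic tail $n>17^{e^{33}}$---if one insists on covering it---one may fall back on any explicit version of Chen's theorem, which produces a representation $n=p+\eta$ with $\omega(\eta)\le 2\le e^{33}$ for every even $n$ beyond an effective threshold far below $17^{e^{33}}$. The main technical point to keep in mind is that Theorem \ref{firsteventhm} controls only the \emph{size} of the smallest prime factor of $\eta$ (via the choice of $k$), not the count $\omega(\eta)$ directly; the cap $k\le 2\cdot 10^5$ prevents us from enlarging the primorial, so the bound on $\omega(\eta)$ we extract grows logarithmically in $n$. The constant $e^{33}$ is tuned to absorb this logarithmic growth throughout the entire range in which Theorem \ref{firsteventhm} is being applied, which is the only step that requires anything quantitative.
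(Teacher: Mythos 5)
Your overall architecture matches the paper's: handle small $n$ directly (the paper uses the Goldbach verification up to $4\cdot 10^{18}$), the middle range via Theorem \ref{firsteventhm} with $k=30030$ so that every prime factor of $\eta$ exceeds $13$, and very large $n$ via the explicit Chen's theorem. However, there is a genuine quantitative gap. Your bound $17^{\omega(\eta)}\le\eta\le n$ gives $\omega(\eta)\le \log n/\log 17$, which is at most $e^{33}$ only for $n\le 17^{e^{33}}$; and your claim that the effective Chen threshold lies ``far below $17^{e^{33}}$'' is backwards. The only explicit threshold available (and the one the paper cites) is $\exp(\exp(36))$, and
\[
\log\bigl(17^{e^{33}}\bigr)=e^{33}\log 17\approx 2.83\, e^{33}\qquad\text{whereas}\qquad \log\bigl(\exp(\exp(36))\bigr)=e^{36}\approx 20.1\, e^{33},
\]
so $17^{e^{33}}$ is vastly \emph{smaller} than $\exp(\exp(36))$. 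In the uncovered range $17^{e^{33}}<n<\exp(\exp(36))$ your estimate only yields $\omega(\eta)<e^{36}/\log 17\approx 7.1\, e^{33}$, which overshoots the claimed constant by a factor of about $7$.

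The fix is to use a sharper lower bound on a square-free number with many prime factors, which is what the paper does: if $\eta$ is square-free with $m$ distinct prime factors all exceeding $13$, then $\eta\ge\prod_{i=1}^{m}p_{i+6}$ (the product of the first $m$ primes past $13$), not merely $17^{m}$. Hence $\theta(p_{m+6})-\theta(13)=\sum_{i=1}^m\log p_{i+6}\le\log\eta<e^{36}$, and the Rosser--Schoenfeld bounds for $\theta$ then give $m<e^{33}$ throughout the entire range $n<\exp(\exp(36))$. The gain over your estimate is essentially a factor of $\log\log n\approx 36$ versus $\log 17$, and this factor is exactly what is needed to land under $e^{33}$ and meet the Chen threshold. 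With that substitution (and keeping the small-$n$ verification), your argument goes through.
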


To prove Theorem \ref{kthm} one uses an explicit version of the linear sieve given in \cite[\S 2]{BJV22} for large $n$, followed by a simple application of Theorem \ref{firsteventhm} for small $n$ (see \cite[Lemma 3.5]{johnstonsome22}).

A variant of Theorem \ref{firsteventhm} assuming the Generalised Riemann Hypothesis is also discussed in \cite[\S 5]{johnstonsome22}.

\subsection{Outline of paper}
The main idea in this paper is to use an inclusion-exclusion argument to generalise the results of \cite{dudek2017sum} and \cite{francis2020additive}. In particular, if $R_k(n)$ denotes the weighted number of representations of $n$ as the sum of a prime and a square-free number coprime to $k$, then we show that
\begin{equation*}
    |R_k(n)-A_k(n)|<E_k(n),
\end{equation*}
for some main term $A_k(n)$ and error term $E_k(n)$. For sufficiently large $n$, we then bound $E_k(n)$ so that $R_k(n)>0$. For smaller values of $n$, we use computations\footnote{Python code for the computations can be found here: \url{https://bit.ly/3m9QYlk}} to establish Theorems \ref{firsteventhm} and \ref{firstoddthm}.

The general outline is as follows. In Section \ref{sectnot}, we establish the main notation and ideas that will be used throughout the paper. In Sections \ref{sectbounds} and \ref{sectasym}, we establish some bounds and asymptotic formulas. Theorems \ref{firsteventhm} and \ref{firstoddthm} are then established in Section \ref{sectcomp}. Finally, in Section \ref{sectext}, we discuss an extension of our results whereby one places further conditions on the square-free number.

\section{Setup and Notation}\label{sectnot}
Following Dudek \cite{dudek2017sum}, we define
\begin{equation*}
    R(n):=\sum_{p\leq n}\mu^2(n-p)\log(p)
\end{equation*}
to be the logarithmically-weighted number of representations of $n$ as the sum of a prime $p$ and a square-free number. Here, $\mu$ is the Möbius function whose square indicates whether its argument is square-free or not. Now, fix a square-free integer $k>0$. We wish to obtain bounds for 
\begin{equation*}
    R_k(n):=\sum_{\substack{p\leq n\\(n-p,k)=1}}\mu^2(n-p)\log(p).
\end{equation*}
In particular, $R_k(n)$ is the number of representations of $n$ as the sum of a prime and a square-free number coprime to $k$.

By \cite[Chapter 2 Exercise 6]{apostol1998introduction} we have
\begin{equation}\label{musquaredeq}
    \mu^2(n)=\sum_{a^2\mid n}\mu(a).
\end{equation}
Using \eqref{musquaredeq} and an inclusion-exclusion argument, we see that
\begin{align*}
    R_k(n)&=R(n)-\sum_{\substack{p\leq n\\(n-p,k)>1}}\mu^2(n-p)\log(p)\\
    &=\sum_{d\mid k}\mu(d)\sum_{\substack{p\leq n\\p\equiv n\: (d)}}\mu^2(n-p)\log(p)\\
    &=\sum_{d\mid k}\mu(d)\sum_{a\leq n^{\frac{1}{2}}}\mu(a)\sum_{\substack{p\leq n\\p\equiv n\: (a^2)\\p\equiv n\: (d)}}\log(p).
\end{align*}
Applying the Chinese remainder theorem then gives
\begin{equation}\label{rksimpeq}
    R_k(n)=\sum_{d\mid k}\mu(d)\sum_{e\mid d}\sum_{\substack{a\leq n^{\frac{1}{2}}\\(a,d)=e}}\mu(a)\theta\left(n;da^2/e,n\right),
\end{equation}
where for any $m>0$,
\begin{equation*}
    \theta\left(x;m,n\right)=\sum_{\substack{p\leq x\\p\equiv n\:(m)}}\log(p).
\end{equation*}
For positive square-free integers $e$ and $d$ with $e\mid d$, we thus define
\begin{equation*}
    R_{d,e}(n):=\sum_{\substack{a\leq n^{\frac{1}{2}}\\(a,d)=e}}\mu(a)\theta\left(n;da^2/e,n\right)
\end{equation*}
and try to bound $R_{d,e}(n)$. When $(n,m)=1$, we will use inequalities of the form
\begin{equation}\label{bennettbounds}
    \left|\theta(n;m,n)-\frac{n}{\varphi(m)}\right|<\frac{c_{\theta}(m)n}{\log(n)},
\end{equation}
where $c_{\theta}(m)$ is a constant depending on $m$. For $m\geq 3$ we take $c_{\theta}(m)$ from \cite{bennett2018explicit} and the values of $n$ for which they apply. For $m=1$, we use \cite[Table 15]{broadbent2021sharper}. The case $m=2$ will not be required for our computations.

\section{Some bounds}\label{sectbounds}
As motivated in the previous section, we now provide bounds for $R_{d,e}(n)$. This will be done using a method that generalises that in \cite{dudek2017sum} and \cite{francis2020additive}. In particular, \cite[Lemma 9]{francis2020additive} corresponds to the case $d=e=1$.
\begin{lemma}\label{delemma}
    Let $e$ and $d$ be positive square-free integers with $e\mid d$. Define
    \begin{equation*}
        A_{d,e}(n):=cn\prod_{p\mid n}\left(1+\frac{1}{p^2-p-1}\right)\frac{\mu(e)d/e}{\prod_{q\mid d}(q^2-q-1)}.
    \end{equation*}
    Here,
    \begin{equation*}
        c=\prod_p\left(1-\frac{1}{p(p-1)}\right)\approx 0.37395
    \end{equation*} 
    is Artin's constant, and $p$ and $q$ are primes. Fix $N>0$ and $C\in(0,1/2)$. Then, for sufficiently large $n$, say $n\geq n_0$, with $(d,n)=1$,
    \begin{align*}
        |R_{d,e}(n)-A_{d,e}(n)|<&\frac{n}{\log(n)}\sum_{\substack{a\leq\sqrt{N}\sqrt{e/d}\\(a,d)=e}}c_{\theta}\left(da^2/e\right)\mu^2(a)\\
        &+n\left(\frac{1+2C}{1-2C}\right)\frac{1}{\varphi\left(d/e\right)}\sum_{\substack{a>\sqrt{N}\sqrt{e/d}\\(a,d)=e}}\frac{\mu^2(a)}{\varphi(a^2)}\\
        &+n\log(n)\left(n^{-\frac{1}{2}}\left(\frac{1}{e}-\frac{1}{d}\right)+\frac{1}{\sqrt{de}}n^{-C}+n^{-2C}\right)=:E_{d,e}(n).
    \end{align*}
    In particular, we can take $n_0$ to be any integer with $n_0^C>\sqrt{N}$ and such that \eqref{bennettbounds} holds for each value of $c_{\theta}(da^2/e)$ used.
\end{lemma}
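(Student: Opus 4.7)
The plan is to reparametrise the sum defining $R_{d,e}(n)$, split it at the threshold where \eqref{bennettbounds} ceases to apply, and identify the main term $A_{d,e}(n)$ by completing a sum and evaluating an Euler product. Since $(a,d) = e$ together with $\mu(a) \neq 0$ force $a = eb$ for $b$ squarefree with $(b,d) = 1$, one has $da^2/e = deb^2$ and
\begin{equation*}
    R_{d,e}(n) = \mu(e) \sum_{\substack{b \leq \sqrt{n}/e \\ (b,d) = 1}} \mu(b)\, \theta(n; deb^2, n).
\end{equation*}
For $b \leq b_0 := \sqrt{N/(de)}$ (equivalently $a \leq \sqrt{Ne/d}$) the modulus $deb^2 \leq N$, so hypothesis \eqref{bennettbounds} applies and yields $\theta(n; deb^2, n) = n/\varphi(deb^2) + O(c_\theta(deb^2)\, n/\log n)$; reverting to the variable $a$, the sum of these error terms is exactly the first line of $E_{d,e}(n)$.

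Next I would complete the small-$b$ main-term sum $\sum \mu(b)\, n/\varphi(deb^2)$ so as to run over all squarefree $b$ with $(b, dn) = 1$. Using $(d,n)=1$ and the factorisation $\varphi(deb^2) = e\,\varphi(d)\, b\,\varphi(b)$ (valid since $b$ is coprime to $de$), the completed sum becomes $(\mu(e) n)/(e\varphi(d)) \cdot \prod_{p \nmid dn}(1 - 1/(p(p-1)))$. Splitting this Euler product as $c$ divided by the factors at primes of $d$ and $n$ separately, and invoking $\prod_{p\mid d} p = d$ and $\prod_{p\mid d}(p-1) = \varphi(d)$ for squarefree $d$, collapses the expression to exactly $A_{d,e}(n)$. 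The tail $a > \sqrt{Ne/d}$ is then bounded using a Brun-Titchmarsh-style inequality of the form $\theta(n;m,n) \leq \tfrac{1+2C}{1-2C}\cdot \tfrac{n}{\varphi(m)}$ valid for $m \leq n^{1-2C}$; combined with $\varphi(da^2/e) = \varphi(d/e)\,\varphi(a^2)$ this produces the second line of $E_{d,e}(n)$.

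The remaining contributions come from the truncation of the $a$-sum at $\sqrt{n}$ rather than extension to infinity, and from the regime where $deb^2 \gtrsim n$ so that only the trivial bound $\theta(n;deb^2,n) \leq \lceil n/(deb^2)\rceil \log n$ is available. Summing $n\log n/(deb^2)$ and $\log n$ respectively over these residual ranges and reorganising in terms of $a$ accounts for the three pieces of the third line of $E_{d,e}(n)$. The main obstacle I anticipate is careful bookkeeping: matching each residual sum to the precise error stated, handling the exceptional moduli with $(n, deb^2) > 1$ in \eqref{bennettbounds} cleanly (these contribute negligibly and can be absorbed), and verifying that the condition $n_0^C > \sqrt{N}$ suffices for every invoked bound to remain within its domain of validity.
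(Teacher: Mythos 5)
Your overall strategy --- reparametrising via $a=eb$, splitting at the modulus threshold $N$, completing the main-term sum and evaluating an Euler product to recover $A_{d,e}(n)$, then Brun--Titchmarsh in a middle range and the trivial bound for large moduli --- is the same as the paper's, and your identification of $A_{d,e}(n)$ is correct. However, there is a genuine gap in the middle-range step. The inequality $\theta(n;m,n)\leq\frac{1+2C}{1-2C}\cdot\frac{n}{\varphi(m)}$ you invoke is not what Brun--Titchmarsh gives: for $m\leq n^{2C}$ (note the threshold is $n^{2C}$, not $n^{1-2C}$; this is also where the trivial-bound regime must begin, not at $m\gtrsim n$) Montgomery--Vaughan yields $\theta(n;m,n)\leq\frac{2}{1-2C}\cdot\frac{n}{\varphi(m)}$, equivalently $\theta(n;m,n)=\frac{n}{\varphi(m)}+\varepsilon\,\frac{1+2C}{1-2C}\cdot\frac{n}{\varphi(m)}$ with $|\varepsilon|<1$. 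The constant $\frac{1+2C}{1-2C}$ in the second line of $E_{d,e}(n)$ measures the \emph{deviation} of $\theta$ from $n/\varphi(m)$, not $\theta$ itself.

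Relatedly, completing the main-term sum creates a debt $-n\sum_{a>\sqrt{N e/d}}\mu(a)/\varphi(da^2/e)$ that your sketch never accounts for. The paper pays this debt by cancelling it against the signed main terms $\mu(a)\,n/\varphi(da^2/e)$ that Brun--Titchmarsh leaves behind over $\sqrt{N}\sqrt{e/d}<a\leq n^{C}\sqrt{e/d}$; only the $\varepsilon$-deviations and the residual completed tail over $a>n^{C}\sqrt{e/d}$ (bounded with constant $1\leq\frac{1+2C}{1-2C}$) survive, which is exactly the second line of $E_{d,e}(n)$. If you instead bound the completion tail and the $\theta$-tail separately by absolute values, you obtain the constant $1+\frac{2}{1-2C}=\frac{3-2C}{1-2C}$ in place of $\frac{1+2C}{1-2C}$, which does not prove the lemma as stated and would materially weaken the downstream numerics. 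The other deferred items --- discarding the $(a,n)>1$ terms at cost $\frac{n^{C}}{\sqrt{de}}\log n$, which must then be offset against the term count in the large-modulus range so as to fit inside the third line of $E_{d,e}(n)$ --- really are bookkeeping, but the cancellation above is a missing idea, not bookkeeping.
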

\begin{proof}
    We will only prove $R_{d,e}(n)<A_{d,e}(n)+E_{d,e}(n)$ since the other inequality $R_{d,e}(n)>A_{d,e}(n)-E_{d,e}(n)$ follows from almost identical reasoning.
    
    First note that if $(a,n)>1$ then $\theta\left(n;da^2/e,n\right)\leq\log(n)$. Moreover, if $(a,d)=e$ then $e\mid a$ so there are less than $(n^{C}\sqrt{e/d})/e=n^C/\sqrt{de}$ values of $a\leq n^C\sqrt{e/d}$ satisfying $(a,d)=e$. Hence,
    \begin{align*}
        R_{d,e}(n)\leq&\sum_{\substack{a\leq n^{C}\sqrt{e/d}\\(a,d)=e\\(a,n)=1}}\mu(a)\theta\left(n;da^2/e,n\right)+\frac{n^C}{\sqrt{de}}\log(n)\\
        &\qquad+\sum_{\substack{n^C\sqrt{e/d}<a\leq n^{\frac{1}{2}}\\(a,d)=e}}\mu(a)\theta\left(n;da^2/e,n\right).
    \end{align*}
    We write
    \begin{equation}\label{maineqnde}
        R_{d,e}(n)\leq\Sigma_1+\Sigma_2+\Sigma_3+\frac{n^C}{\sqrt{de}}\log(n),
    \end{equation}
    where
    \begin{align*}
        \Sigma_1&=\sum_{\substack{a\leq \sqrt{N}\sqrt{e/d}\\(a,d)=e\\(a,n)=1}}\mu(a)\theta\left(n;da^2/e,n\right),\\
        \Sigma_2&=\sum_{\substack{\sqrt{N}\sqrt{e/d}<a\leq n^{C}\sqrt{e/d}\\(a,d)=e\\(a,n)=1}}\mu(a)\theta\left(n;da^2/e,n\right),\ \text{and}\\
        \Sigma_3&=\sum_{\substack{n^C\sqrt{e/d}< a\leq n^{\frac{1}{2}}\\(a,d)=e}}\mu(a)\theta\left(n;da^2/e,n\right).
    \end{align*}
    Using \eqref{bennettbounds} for $a\leq\sqrt{N}\sqrt{e/d}$, we have
    \begin{align*}
        \Sigma_1&<n\left(\sum_{\substack{a\leq\sqrt{N}\sqrt{e/d}\\(a,d)=e\\(a,n)=1}}\frac{\mu(a)}{\varphi\left(da^2/e\right)}+\sum_{\substack{a\leq\sqrt{N}\sqrt{e/d}\\(a,d)=e}}\frac{c_{\theta}\left(da^2/e\right)\mu^2(a)}{\log(n)}\right)\\
        &=n\left(\sum_{\substack{(a,d)=e\\(a,n)=1}}\frac{\mu(a)}{\varphi\left(da^2/e\right)}-\sum_{\substack{a>\sqrt{N}\sqrt{e/d}\\(a,d)=e\\(a,n)=1}}\frac{\mu(a)}{\varphi\left(da^2/e\right)}+\sum_{\substack{a\leq\sqrt{N}\sqrt{e/d}\\(a,d)=e}}\frac{c_{\theta}\left(da^2/e\right)\mu^2(a)}{\log(n)}\right)
    \end{align*}
    For the first term, we write $a=eb$ where $(b,d)=(b,e)=1$. Then,
    \begin{align*}
        n\sum_{\substack{(a,d)=e\\(a,n)=1}}\frac{\mu(a)}{\varphi\left(da^2/e\right)}&=\frac{n}{\varphi(d/e)}\frac{\mu(e)}{\varphi(e^2)}\sum_{\substack{(b,d)=1\\(eb,n)=1}}\frac{\mu(b)}{\varphi(b^2)}\\
        &=n\frac{\mu(e)}{\varphi(de)}\sum_{\substack{(b,d)=1\\(eb,n)=1}}\frac{\mu(b)}{\varphi(b^2)}.
    \end{align*}
    Since $(d,n)=(e,n)=1$, this is equivalent to
    \begin{align*}
        n\frac{\mu(e)}{\varphi(de)}\prod_{\substack{p\nmid n \\ p\nmid d}}\left(1-\frac{1}{\varphi(p^2)}\right)&=n\frac{\mu(e)}{\varphi(de)}\frac{\prod_{p}\left(1-\frac{1}{\varphi(p^2)}\right)}{\prod_{p\mid n}\left(1-\frac{1}{\varphi(p^2)}\right)\prod_{q\mid d}\left(1-\frac{1}{\varphi(q^2)}\right)}
    \end{align*}
    which simplifies to $A_{d,e}(n)$. Hence, 
    \begin{equation*}
        \Sigma_1<A_{d,e}(n)+n\left(-\sum_{\substack{a>\sqrt{N}\sqrt{e/d}\\(a,d)=e\\(a,n)=1}}\frac{\mu(a)}{\varphi\left(da^2/e\right)}+\frac{1}{\log(n)}\sum_{\substack{a\leq\sqrt{N}\sqrt{e/d}\\(a,d)=e}}c_{\theta}\left(da^2/e\right)\mu^2(a)\right).
    \end{equation*}
    Next we move onto $\Sigma_2$. Similarly to Dudek \cite{dudek2017sum}, we use the explicit Brun--Titchmarsh theorem \cite[Theorem 2]{montgomery1973large} to obtain
    \begin{equation*}
        \theta\left(n;da^2/e,n\right)=\frac{n}{\varphi\left(da^2/e\right)}+\varepsilon\left(\frac{1+2C}{1-2C}\right)\frac{n}{\varphi\left(da^2/e\right)}
    \end{equation*}
    for some $\varepsilon$ with $|\varepsilon|<1$. As a result,
    \begin{equation*}
        \Sigma_2<n\left(\sum_{\substack{\sqrt{N}\sqrt{e/d}<a\leq n^{C}\sqrt{e/d}\\(a,d)=e\\(a,n)=1}}\frac{\mu(a)}{\varphi\left(da^2/e\right)}+\left(\frac{1+2C}{1-2C}\right)\sum_{\substack{\sqrt{N}\sqrt{e/d}<a\leq n^{C}\sqrt{e/d}\\(a,d)=e\\(a,n)=1}}\frac{\mu^2(a)}{\varphi\left(da^2/e\right)}\right).
    \end{equation*}
    Thus, noting that $\varphi(da^2/e)=\varphi(d/e)\varphi(a^2)$ if $(a,d)=e$,
    \begin{align*}
        \Sigma_1+\Sigma_2&<A_{d,e}(n)+n\left(\frac{1}{\log(n)}\sum_{\substack{a\leq\sqrt{N}\sqrt{e/d}\\(a,d)=e}}c_{\theta}\left(da^2/e\right)\mu^2(a)-\sum_{\substack{a>n^C\sqrt{e/d}\\(a,d)=e\\(a,n)=1}}\frac{\mu(a)}{\varphi(da^2/e)}\right.\\
        &\qquad\qquad\qquad\qquad+\left.\left(\frac{1+2C}{1-2C}\right)\frac{1}{\varphi\left(d/e\right)}\sum_{\substack{\sqrt{N}\sqrt{e/d}<a\leq n^{C}\sqrt{e/d}\\(a,d)=e\\(a,n)=1}}\frac{\mu^2(a)}{\varphi(a^2)}\right)\\
        &<A_{d,e}(n)+n\left(\frac{1}{\log(n)}\sum_{\substack{a\leq\sqrt{N}\sqrt{e/d}\\(a,d)=e}}c_{\theta}\left(da^2/e\right)\mu^2(a)\right.\\
        &\qquad\qquad\qquad\qquad+\left.\left(\frac{1+2C}{1-2C}\right)\frac{1}{\varphi\left(d/e\right)}\sum_{\substack{a>\sqrt{N}\sqrt{e/d}\\(a,d)=e}}\frac{\mu^2(a)}{\varphi(a^2)}\right).
    \end{align*}
    Finally, for $\Sigma_3$, we use the substitution $\ell=a/e$, and the trivial bound $\theta(x;m,n)\leq (1+\frac{x}{m})\log x$ to obtain
    \begin{align*}
        |\Sigma_3|&\leq\sum_{\substack{n^C\sqrt{e/d}< a\leq n^{\frac{1}{2}}\\(a,d)=e}}\theta\left(n;da^2/e,n\right)\notag\\
        &\leq\sum_{n^C/\sqrt{de}< \ell\leq n^{\frac{1}{2}}/e}\theta\left(n;\ell^2de,n\right)\notag\\
        &\leq\sum_{n^C/\sqrt{de}< \ell\leq n^{\frac{1}{2}}/e}\left(1+\frac{n}{\ell^2de}\right)\log(n)\notag\\
        &\leq\frac{n^{\frac{1}{2}}\log(n)}{e}-\frac{n^C}{\sqrt{de}}\log(n)+\frac{n\log(n)}{de}\sum_{n^C/\sqrt{de}< \ell\leq n^{\frac{1}{2}}/e}\frac{1}{\ell^2}\notag\\
        &\leq n^{\frac{1}{2}}\log(n)\left(\frac{1}{e}-\frac{1}{d}\right)-\frac{n^C}{\sqrt{de}}\log(n)+\frac{n^{1-C}}{\sqrt{de}}\log(n)+n^{1-2C}\log(n),
    \end{align*}
    where the last inequality follows after a standard partial summation argument.\\[8pt]
    Substituting our inequalities for $\Sigma_1$, $\Sigma_2$ and $\Sigma_3$ into \eqref{maineqnde} gives $R_{d,e}(n)<A_{d,e}(n)+E_{d,e}(n)$ as desired.
\end{proof}

We can now get bounds on $R_k(n)$.
\begin{proposition}\label{propcoprime}
    Let $k>0$ be a square-free integer and define
    \begin{equation*}
        A_k(n):=cn\prod_{p\mid n}\left(1+\frac{1}{p^2-p-1}\right)\prod_{q\mid k}\left(1-\frac{q-1}{q^2-q-1}\right).
    \end{equation*}
    Keeping the notation of Lemma \ref{delemma}, if $(k,n)=1$ and $n\geq n_0$ for each choice of $e$ and $d$ with $e\mid d\mid k$, then
    \begin{equation*}
        \left|R_k(n)-A_k(n)\right|<\sum_{d\mid k}\sum_{e\mid d}E_{d,e}(n)=:E_k(n).
    \end{equation*}
\end{proposition}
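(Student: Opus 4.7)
The plan is to start from the decomposition in equation \eqref{rksimpeq}, namely
\begin{equation*}
    R_k(n) = \sum_{d\mid k} \mu(d) \sum_{e\mid d} R_{d,e}(n),
\end{equation*}
apply Lemma \ref{delemma} termwise to replace each $R_{d,e}(n)$ by $A_{d,e}(n)$ (up to error $E_{d,e}(n)$), and then verify the combinatorial identity $\sum_{d\mid k}\mu(d)\sum_{e\mid d}A_{d,e}(n) = A_k(n)$. The hypothesis $(k,n)=1$ guarantees $(d,n)=1$ for every $d\mid k$, so the Lemma applies to each pair $(d,e)$ with $e\mid d\mid k$.

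The error estimate is immediate: since $|\mu(d)|\leq 1$, the triangle inequality gives
\begin{equation*}
    \left|R_k(n) - \sum_{d\mid k}\mu(d)\sum_{e\mid d}A_{d,e}(n)\right| \leq \sum_{d\mid k}\sum_{e\mid d}|R_{d,e}(n) - A_{d,e}(n)| < \sum_{d\mid k}\sum_{e\mid d}E_{d,e}(n),
\end{equation*}
so it remains to identify the main term.

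Substituting the definition of $A_{d,e}(n)$ and pulling out the factors that do not depend on $d$ or $e$,
\begin{equation*}
    \sum_{d\mid k}\mu(d)\sum_{e\mid d}A_{d,e}(n) = cn\prod_{p\mid n}\left(1+\frac{1}{p^2-p-1}\right)\sum_{d\mid k}\frac{\mu(d)}{\prod_{q\mid d}(q^2-q-1)}\sum_{e\mid d}\mu(e)\frac{d}{e}.
\end{equation*}
The inner sum is the classical identity $\sum_{e\mid d}\mu(e)d/e = \varphi(d)$. For squarefree $d$ we have $\varphi(d) = \prod_{q\mid d}(q-1)$ and $\mu(d) = \prod_{q\mid d}(-1)$, so the combined factor becomes a multiplicative function of $d$, and the sum over divisors of the squarefree integer $k$ factors as
\begin{equation*}
    \sum_{d\mid k}\prod_{q\mid d}\frac{-(q-1)}{q^2-q-1} = \prod_{q\mid k}\left(1 - \frac{q-1}{q^2-q-1}\right).
\end{equation*}
This precisely recovers $A_k(n)$, completing the proof.

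There is no real obstacle here: the result is essentially a bookkeeping consequence of Lemma \ref{delemma}. The only step requiring care is the collapse of the double sum into the Euler product over $q\mid k$, which relies on $k$ being squarefree so that every $d\mid k$ is itself squarefree and the Möbius/totient factorisations on $d$ hold as stated. Once this is in place, linearity of the bound in $(d,e)$ finishes the argument.
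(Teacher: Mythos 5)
Your proposal is correct and follows essentially the same route as the paper: decompose $R_k(n)$ via \eqref{rksimpeq}, apply Lemma \ref{delemma} termwise, and collapse the main term to $A_k(n)$ (the paper states the two one-sided inequalities separately and asserts the combinatorial identity without the $\sum_{e\mid d}\mu(e)d/e=\varphi(d)$ detail you supply, but these are cosmetic differences).
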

\begin{proof}
    We will only prove $R_k(n)>A_k(n)-E_k(n)$ since the other inequality $R_k(n)<A_k(n)+E_k(n)$ follows from almost identical reasoning. Now, because $(k,n)=1$ we have $(d,n)=1$ for each $d\mid k$. Thus, by Lemma \ref{delemma} and \eqref{rksimpeq}
    \begin{align*}
        R_k(n)&=\sum_{d\mid k}\mu(d)\sum_{e\mid d}R_{d,e}(n)\\
        &>\sum_{d\mid k}\mu(d)\sum_{e\mid d}A_{d,e}(n)-\sum_{d\mid k}\sum_{e\mid d}E_{d,e}(n)\\
        &=cn\prod_{p\mid n}\left(1+\frac{1}{p^2-p-1}\right)\sum_{d\mid k}\mu(d)\sum_{e\mid d}\frac{\mu(e)d/e}{\prod_{q\mid d}(q^2-q-1)}-E_k(n).
    \end{align*}
    To complete the proof, we then note $\varphi(d)=\sum_{e\mid d}\mu(e)d/e$ so that
    \begin{align*}
        \sum_{d\mid k}\mu(d)\sum_{e\mid d}\frac{\mu(e)d/e}{\prod_{q\mid d}(q^2-q-1)}&=\sum_{d\mid k}\frac{\mu(d)\varphi(d)}{\prod_{q\mid d}(q^2-q-1)}\\
        &=\prod_{q\mid k}\left(1-\frac{\varphi(q)}{q^2-q-1}\right)\\
        &=\prod_{q\mid k}\left(1-\frac{q-1}{q^2-q-1}\right)\qedhere
    \end{align*}
\end{proof}
\begin{proposition}\label{propnocoprime}
    Keeping the notation from Proposition \ref{propcoprime}, suppose now that $k$ is not necessarily coprime to $n$. Writing $k_n=k/(k,n)$ and letting $n\geq n_0$ for each choice of $e$ and $d$ with $e\mid d\mid k_n$, we have
    \begin{equation}\label{logkneq}
        |R_k(n)-A_{k_n}(n)|<E_{k_n}(n)+\log((k,n)).
    \end{equation}
\end{proposition}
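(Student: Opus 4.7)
The plan is to reduce the proposition to Proposition \ref{propcoprime} applied to $k_n$, paying a small explicit penalty for the primes lost when passing from $R_k$ to $R_{k_n}$.

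First I would observe that since $k$ is square-free, $(k_n,n)=1$. Indeed, if a prime $q$ divided both $k_n$ and $n$, then $q\mid(k,n)$ and $q\mid k_n=k/(k,n)$, forcing $q^2\mid k$. Hence Proposition \ref{propcoprime} applies to $k_n$ and gives
\begin{equation*}
    |R_{k_n}(n)-A_{k_n}(n)|<E_{k_n}(n).
\end{equation*}

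Next I would compare $R_k(n)$ and $R_{k_n}(n)$. Because $k_n\mid k$, every $p$ counted by $R_k(n)$ is also counted by $R_{k_n}(n)$, so $R_k(n)\leq R_{k_n}(n)$, and
\begin{equation*}
    R_{k_n}(n)-R_k(n)=\sum_{\substack{p\leq n\\(n-p,k_n)=1\\(n-p,k)>1}}\mu^2(n-p)\log(p).
\end{equation*}
For a prime $p$ contributing to this difference there must exist a prime $q\mid k$ with $q\nmid k_n$ such that $q\mid n-p$. Since $k/k_n=(k,n)$, the primes $q\mid k$ with $q\nmid k_n$ are exactly the primes dividing $(k,n)$. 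But any such $q$ divides $n$, so $q\mid n-p$ forces $q\mid p$, i.e.\ $p=q$. Each prime $q\mid (k,n)$ therefore contributes at most $\log(q)$, and since $(k,n)$ is square-free,
\begin{equation*}
    0\leq R_{k_n}(n)-R_k(n)\leq\sum_{q\mid (k,n)}\log(q)=\log((k,n)).
\end{equation*}

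Finally, combining the two displayed inequalities via the triangle inequality yields
\begin{equation*}
    |R_k(n)-A_{k_n}(n)|\leq|R_k(n)-R_{k_n}(n)|+|R_{k_n}(n)-A_{k_n}(n)|<E_{k_n}(n)+\log((k,n)),
\end{equation*}
which is \eqref{logkneq}. The only non-routine step is the combinatorial identification of the primes contributing to $R_{k_n}(n)-R_k(n)$; once one notices that every prime $q\mid(k,n)$ already divides $n$, the condition $q\mid n-p$ collapses to $p=q$, and the bound $\log((k,n))$ drops out immediately from square-freeness of $k$.
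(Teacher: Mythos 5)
Your proof is correct and follows essentially the same route as the paper: reduce to Proposition \ref{propcoprime} applied to $k_n$ (noting $(k_n,n)=1$ by square-freeness of $k$) and observe that the only representations counted by $R_{k_n}(n)$ but not $R_k(n)$ have $p=q$ for some prime $q\mid(k,n)$, costing at most $\log((k,n))$. The paper phrases this as separate upper and lower bounds rather than your single two-sided estimate plus triangle inequality, but the content is identical; your explicit verification that $(k_n,n)=1$ is a nice touch the paper leaves implicit.
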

\begin{proof}
    If $(k,n)=1$, \eqref{logkneq} is precisely Proposition \ref{propcoprime}. Now suppose that $(k,n)>1$. Since $k_n\mid k$, it follows that
    \begin{equation}\label{rkupper}
        R_k(n)\leq R_{k_n}(n)<A_{k_n}(n)+E_{k_n}(n)<A_{k_n}(n)+E_{k_n}(n)+\log((k,n)).
    \end{equation}
    Recall that $R_{k_n}(n)$ is the logarithmically-weighted number of representations of $n$ as
    \begin{equation*}
        n=p+\eta
    \end{equation*}
    where $p$ is prime and $\eta$ is square-free and coprime to $k_n$. If $p$ is coprime to $(k,n)=k/k_n$ then since $(k,n)\mid n$, we have that $\eta$ must also be coprime to $(k,n)$ and thus to $k$. Hence,
    \begin{align}\label{rklower}
        R_{k}(n)&>R_{k_n}(n)-\sum_{p\mid (k,n)}\log(p)\notag\\
        &=R_{k_n}(n)-\log((k,n))\notag\\
        &>A_{k_n}(n)-E_{k_n}(n)-\log((k,n)).
    \end{align}
    Combining \eqref{rkupper} and \eqref{rklower} gives \eqref{logkneq}.
\end{proof}

\section{Some asymptotics}\label{sectasym}

We would like $R_k(n)>0$ for sufficiently large $n$. However, this is not possible if $k$ is even. If $k$ is even, we are looking at representations $n=p+\eta$ where $p$ is prime and $\eta$ is an odd square-free number. If $n$ is odd, then $p=2$ and $\eta=n-2$. But there are infinitely many odd integers $n$ for which $n-2$ is not square-free.

Besides this caveat, we otherwise get positive asymptotic expressions for $R_k(n)$.
\begin{theorem}\label{asympthm}
    Suppose that $k$ is odd. Using the notation from Proposition \ref{propnocoprime}, we have, as $n\to\infty$
    \begin{equation*}
        \frac{R_k(n)}{n}\sim \frac{A_{k_n}(n)}{n}=c\prod_{p\mid n}\left(1+\frac{1}{p^2-p-1}\right)\prod_{q\mid k_n}\left(1-\frac{q-1}{q^2-q-1}\right)>0.
    \end{equation*}
    And if $k$ is even,
    \begin{equation*}
        \frac{R_k(2n)}{2n}\sim \frac{A_{k_{2n}}(2n)}{2n}=c\prod_{p\mid 2n}\left(1+\frac{1}{p^2-p-1}\right)\prod_{q\mid k_{2n}}\left(1-\frac{q-1}{q^2-q-1}\right)>0.
    \end{equation*}
\end{theorem}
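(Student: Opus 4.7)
The plan is to derive the asymptotic directly from Proposition \ref{propnocoprime}, which yields
$$\left|R_k(n)-A_{k_n}(n)\right|<E_{k_n}(n)+\log((k,n))$$
for all sufficiently large $n$. Since $\log((k,n))\leq\log k$ is a bounded constant, it suffices to prove two things: (i) $A_{k_n}(n)/n$ is bounded below by a positive constant depending only on $k$, and (ii) $E_{k_n}(n)=o(n)$. Together these give $R_k(n)=A_{k_n}(n)(1+o(1))$, i.e.\ the claimed asymptotic $R_k(n)/n\sim A_{k_n}(n)/n$, with (i) providing the positivity simultaneously. The formula for $A_{k_n}(n)/n$ in the statement is just the definition.

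For (i), the first step is to check that only odd primes can divide $k_n$ (or $k_{2n}$). In the odd-$k$ case this is immediate from $k_n\mid k$. In the even-$k$ case, writing $k=2k'$ with $k'$ odd and square-free, one computes $(k,2n)=2(k',n)$, so $k_{2n}=k'/(k',n)$ is odd. The factor $1-(q-1)/(q^2-q-1)$ is strictly positive for every odd prime $q$ (it vanishes exactly at $q=2$), while $\prod_{p\mid n}(1+1/(p^2-p-1))\geq 1$ always. Combining with Artin's constant $c>0$, this yields the uniform lower bound
$$\frac{A_{k_n}(n)}{n}\geq c\prod_{\substack{q\mid k\\q\text{ odd}}}\left(1-\frac{q-1}{q^2-q-1}\right)>0,$$
depending only on $k$, and analogously for $A_{k_{2n}}(2n)/(2n)$ in the even case.

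For (ii), the crucial observation is that $k_n$ ranges over only the finitely many divisors of $k$, so it suffices to show $E_{k'}(n)/n\to 0$ for each fixed square-free divisor $k'$ of $k$. Writing $E_{k'}(n)=\sum_{d\mid k'}\sum_{e\mid d}E_{d,e}(n)$, I would handle each of the three contributions to $E_{d,e}(n)$ from Lemma \ref{delemma} separately: the first is a fixed finite sum (for fixed $N$) multiplied by $n/\log n$, hence $o(n)$; the third is $O(n^{1/2}\log n+n^{1-C}\log n+n^{1-2C}\log n)=o(n)$ since $C\in(0,1/2)$; and the middle piece is $\delta(N)\cdot n$ where $\delta(N)$ is a tail of the convergent series $\sum_a\mu^2(a)/\varphi(a^2)=\prod_p(1+1/(p(p-1)))$. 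Sending $n\to\infty$ with $N$ fixed gives $\limsup_{n\to\infty} E_{d,e}(n)/n\leq\delta(N)$, and since Lemma \ref{delemma} is valid for every fixed $N>0$ while $\delta(N)\to 0$ as $N\to\infty$, the $\limsup$ is forced to be zero. This iterated limit — first choose $N$ large to shrink the convergent tail, then $n$ much larger (depending on $N$) to neutralise the $N$-dependent constants in the other two pieces — is the only genuinely delicate point in the proof; everything else is a direct assembly of the preceding propositions.
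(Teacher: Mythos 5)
Your proposal is correct and follows essentially the same route as the paper: apply Proposition \ref{propnocoprime}, bound $A_{k_n}(n)/n$ between positive constants, absorb $\log((k,n))\leq\log k$, and kill $E_{d,e}(n)/n$ by the iterated limit of first taking $N$ large to shrink the convergent tail and then letting $n\to\infty$. Your explicit check that $k_{2n}$ is odd in the even case (so the factor that vanishes at $q=2$ never appears) is a detail the paper leaves implicit under ``almost identical,'' but it is not a different argument.
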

\begin{proof}
    We begin with the case where $k$ is odd. Since $\sum_{p}\frac{1}{p^2-p-1}$ converges,
    \begin{equation}\label{nbounds}
        1\leq\prod_{p\mid n}\left(1+\frac{1}{p^2-p-1}\right)< K
    \end{equation}
    for some constant $K>1$. Moreover,
    \begin{equation}\label{kbounds}
        0<\prod_{q\mid k}\left(1-\frac{q-1}{q^2-q-1}\right)\leq\prod_{q\mid k_n}\left(1-\frac{q-1}{q^2-q-1}\right)\leq 1
    \end{equation}
    Combining \eqref{nbounds} and \eqref{kbounds} we see that $C_1n<A_{k_n}(n)<C_2n$, where
    \begin{align*}
        C_1&=c\prod_{q\mid k}\left(1-\frac{q-1}{q^2-q-1}\right),\\
        C_2&=cK
    \end{align*}
    are positive constants (with $C_1$ depending on $k$). Hence it suffices to prove that for any $\varepsilon>0$,
    \begin{equation}\label{epsiloneq}
        \frac{|R_k(n)-A_{k_n}(n)|}{n}<\varepsilon,
    \end{equation}
    holds for sufficiently large $n$. By Proposition \ref{propnocoprime} we have
    \begin{equation*}
        |R_k(n)-A_{k_n}(n)|<E_{k_n}(n)+\log((k,n)).
    \end{equation*}
    Now, $\log((k,n))\leq\log(k)=o(n)$. Then, $E_{k_n}(n)=\sum_{d\mid k_n}\sum_{e\mid d}E_{d,e}(n)$ so it suffices to show that $E_{d,e}(n)/n$ can be made arbitrarily small as $n\to\infty$. With reference to Lemma \ref{delemma}, we fix $C$ and note that the term
    \begin{align*}
        \left(\frac{1+2C}{1-2C}\right)\frac{1}{\varphi\left(d/e\right)}\sum_{\substack{a>\sqrt{N}\sqrt{e/d}\\(a,d)=e}}\frac{\mu^2(a)}{\varphi(a^2)}
    \end{align*}
    appearing in $E_{d,e}(n)/n$ can be made arbitrarily small by setting $N$ to be arbitrarily large. The result then follows since all other terms in $E_{d,e}(n)$ are $o(n)$.
    
    The argument for even $k$ is almost identical.
\end{proof}

\begin{corollary}
    If $k$ is odd, there exists $n_k>0$ such that $R_k(n)>0$ for all $n>n_k$.
\end{corollary}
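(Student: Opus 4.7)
The plan is to derive the corollary as an essentially immediate consequence of the asymptotic established in Theorem \ref{asympthm}. First I would observe that the right-hand side of the asymptotic, namely
\[
\frac{A_{k_n}(n)}{n} = c\prod_{p\mid n}\left(1+\frac{1}{p^2-p-1}\right)\prod_{q\mid k_n}\left(1-\frac{q-1}{q^2-q-1}\right),
\]
is not merely positive for each $n$ but is uniformly bounded below by a positive constant depending only on $k$. Indeed, the first product is always $\geq 1$, and since $k_n\mid k$,
\[
\prod_{q\mid k_n}\left(1-\frac{q-1}{q^2-q-1}\right)\;\geq\;\prod_{q\mid k}\left(1-\frac{q-1}{q^2-q-1}\right)\;=:\;\kappa(k)>0,
\]
because each factor in the finite product over $q\mid k$ is strictly positive (here we crucially use that $k$ is odd, so $q\geq 3$ and $q^2-q-1>q-1$). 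Hence $A_{k_n}(n)/n\geq c\,\kappa(k)>0$ uniformly in $n$.

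Next I would combine this with Theorem \ref{asympthm}. Applying the asymptotic with $\varepsilon = c\kappa(k)/2$, there exists $n_k$ such that for all $n>n_k$,
\[
\left|\frac{R_k(n)}{n}-\frac{A_{k_n}(n)}{n}\right|<\frac{c\kappa(k)}{2},
\]
and therefore
\[
\frac{R_k(n)}{n}>\frac{A_{k_n}(n)}{n}-\frac{c\kappa(k)}{2}\geq c\kappa(k)-\frac{c\kappa(k)}{2}=\frac{c\kappa(k)}{2}>0.
\]
In particular $R_k(n)>0$ for all $n>n_k$, which is the required statement.

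There is essentially no obstacle here; the only subtlety worth flagging is that the lower bound $\kappa(k)$ must be uniform in $n$ (since $k_n$ varies with $n$), and this uses both that $k_n\mid k$ and that each local factor at an odd prime is bounded away from zero. The hypothesis that $k$ is odd enters precisely at this point, since including the prime $q=2$ would force a factor of $1-1/1=0$ in $\kappa(k)$, reflecting the parity obstruction already discussed before Theorem \ref{asympthm}.
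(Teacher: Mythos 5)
Your argument is correct and is exactly the route the paper intends: the corollary is stated without proof as an immediate consequence of Theorem \ref{asympthm}, whose proof already contains both ingredients you use, namely the uniform bounds $C_1 n < A_{k_n}(n) < C_2 n$ (your $c\,\kappa(k)$ is the paper's $C_1$, coming from \eqref{nbounds} and \eqref{kbounds}) and the estimate \eqref{epsiloneq}. Your remark about where oddness of $k$ enters (the factor at $q=2$ vanishing) matches the parity discussion at the start of Section \ref{sectasym}.
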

\begin{corollary}
    If $k$ is even, there exists $n_k>0$ such that $R_k(n)>0$ for all \textbf{even} $n>n_k$.
\end{corollary}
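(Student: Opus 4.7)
The plan is to deduce this directly from Theorem \ref{asympthm}. By that theorem, for even $k$,
\[
\frac{R_k(2n)}{2n} \sim \frac{A_{k_{2n}}(2n)}{2n} = c\prod_{p \mid 2n}\left(1+\frac{1}{p^2-p-1}\right)\prod_{q \mid k_{2n}}\left(1-\frac{q-1}{q^2-q-1}\right),
\]
and the argument given for Theorem \ref{asympthm} in fact establishes the stronger statement that $|R_k(2n)-A_{k_{2n}}(2n)|/(2n) \to 0$. Hence it will suffice to show that $A_{k_{2n}}(2n)/(2n)$ admits a positive lower bound that is uniform in $n$; the corollary then follows by taking $n_k$ large enough that the (uniformly vanishing) error term is smaller than half of that bound.

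For the uniform lower bound I would argue as follows. Since $k$ is even and $2n$ is even, $2 \mid (k,2n)$, and so $2 \nmid k_{2n} = k/(k,2n)$. Therefore every prime $q$ dividing $k_{2n}$ satisfies $q \geq 3$, and for such $q$ one checks that $q^2-q-1-(q-1) = q(q-2) > 0$, so each factor $1-(q-1)/(q^2-q-1)$ is strictly positive. Moreover, since $k_{2n} \mid k$,
\[
\prod_{q \mid k_{2n}}\left(1-\frac{q-1}{q^2-q-1}\right) \geq \prod_{\substack{q \mid k \\ q \text{ odd}}}\left(1-\frac{q-1}{q^2-q-1}\right),
\]
a fixed positive constant depending only on $k$. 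Combined with the trivial bound $\prod_{p \mid 2n}(1+1/(p^2-p-1)) \geq 1$, this gives $A_{k_{2n}}(2n)/(2n) \geq c_0 > 0$ uniformly in $n$, as required.

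There is no substantive obstacle: the statement is essentially immediate once one observes that the asymptotic density is bounded away from zero. The only delicate point worth highlighting is that the factor corresponding to $q=2$ in the product defining $A_k(n)$ vanishes identically (it equals $1 - 1/(4-2-1) = 0$); this is precisely why the restriction to even $n$ is necessary, as only then is the prime $2$ divided out of $k_{2n}$, leaving the product strictly positive.
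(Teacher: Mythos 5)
Your proof is correct and is essentially the argument the paper intends: the corollary is stated without proof as an immediate consequence of Theorem \ref{asympthm}, whose own proof already establishes both the uniform decay $|R_k(n)-A_{k_n}(n)|/n\to 0$ and the uniform positive lower bound on $A_{k_n}(n)/n$ (via the analogues of \eqref{nbounds} and \eqref{kbounds} for the even case, where the product is taken over the odd prime divisors of $k$). Your observation that the $q=2$ factor vanishes, forcing the restriction to even $n$, matches the discussion at the start of Section \ref{sectasym}.
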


\section{Computations and results}\label{sectcomp}
We now move on to our main computations and results. The code for this section can be found at \url{https://bit.ly/3m9QYlk}.

For our main results, Theorems \ref{firsteventhm} and \ref{firstoddthm}, we wish to exclude the square-free number $\eta=1$ from our representations. We thus define $\overline{R}_k(n)$ to be the logarithmically-weighted number of representations of $n$ as $n=p+\eta$ where $p$ is a prime, $\eta$ is a square-free number coprime to $k$ \textbf{and} $\eta\neq 1$. More precisely,
\begin{equation*}
    \overline{R}_k(n):=\sum_{\substack{p\leq n\\(n-p,k)=1\\p\neq n-1}}\mu^2(n-p)\log(p).
\end{equation*}
Note that
\begin{equation}\label{overlineeq}
    \overline{R}_k(n)\geq R_k(n)-\log(n-1)> R_k(n)-\log(n).
\end{equation}
Together with Theorem \ref{asympthm}, \eqref{overlineeq} implies that $\overline{R}_k(n)\sim R_k(n)$ when $k$ is odd and $\overline{R}_k(2n)\sim R_k(2n)$ when $k$ is even.

For a specific value of $k$ we also define the exception set
\begin{equation*}
    S_k=
    \begin{cases}
        \{n\in\N\::\:\overline{R}_k(n)=0\},&\text{if $k$ is odd},\\
        \{n\in 2\N\::\:\overline{R}_k(n)=0\},&\text{if $k$ is even}.
    \end{cases}
\end{equation*}
For our computations we are mainly interested in finding the largest value in $S_k$.

\subsection{Results for $n$ even}\label{evenresults}
We begin with the case where $n$ is even. This turns out to be significantly easier due to the assistance we get from Oliveira e Silva, Herzog and Pardi's Goldbach verification up to $4\cdot10^{18}$ \cite{e2014empirical}.

Let $k$ be square-free and even with $2\leq k\leq 2\cdot 10^5$. This range of $k$ is chosen in regard to the limits of Bennett et al.'s \cite{bennett2018explicit} explicit results for primes in arithmetic progressions. In particular, if $m\leq 10^5$, then \cite[Theorem 1.2]{bennett2018explicit} gives a value for $c_{\theta}(m)$ that holds for any $x\geq 8\cdot 10^9$. Whereas if $m>10^5$, then \cite{bennett2018explicit} only gives a value for $c_{\theta}(m)$ that holds for very large values of $x$, namely $x\geq\exp(0.03\sqrt{m}\log(m)^3)$.

Note that since $k$ is even, $(\eta,k)=1$ implies $(\eta,k/2)=1$ for any $\eta\in\N$. That is, $\overline{R}_{k}(n)>0$ implies $\overline{R}_{k/2}(n)>0$. Therefore, our subsequent results for even $k\leq 2\cdot 10^5$ also give us information about representations with the square-free number coprime to odd numbers $k/2\leq 10^5$ (see Corollary \ref{evencor}).
\begin{lemma}\label{evencomp}
    Let $n\geq 4\cdot 10^{18}$ be even and $k$ be square-free and even with $2\leq k\leq 2\cdot 10^5$. Then for any $C\in(0,1/2)$ with $n^C>\sqrt{10^5}$,
    \begin{align}\label{eveneq}
        \frac{\overline{R_k}(n)}{n}>2c&\prod_{q\mid k/2}\left(1-\frac{q-1}{q^2-q-1}\right)-\frac{1}{\log(n)}\sum_{\substack{d\mid k/2}}\sum_{e\mid d}\sum_{\substack{a\leq\sqrt{10^5}\sqrt{e/d}\\(a,d)=e}}c_{\theta}\left(da^2/e\right)\mu^2(a)\notag\\
        &-\left(\frac{1+2C}{1-2C}\right)\sum_{d\mid k/2}\sum_{e\mid d}\frac{1}{\varphi\left(d/e\right)}\sum_{\substack{a>\sqrt{10^5}\sqrt{e/d}\\(a,d)=e}}\frac{\mu^2(a)}{\varphi(a^2)}\notag\\
        &-\log(n)\left(\sum_{d\mid k/2}\sum_{e\mid d}\left(n^{-\frac{1}{2}}\left(\frac{1}{e}-\frac{1}{d}\right)+\frac{1}{\sqrt{de}}n^{-C}+n^{-2C}\right)\right)\notag\\
        &-\frac{\log(k)}{n}-\frac{\log(n)}{n}.
    \end{align}
    Here, we take $c_{\theta}(1)$ to be $8.6315\cdot 10^{-7}$ as per \cite[Table 15]{broadbent2021sharper}.
\end{lemma}
\begin{proof}
    By Proposition \ref{propnocoprime},
    \begin{equation*}
        R_k(n)>A_{k/2}(n)-E_{k/2}(n)-\log((k,n))>A_{k/2}(n)-E_{k/2}(n)-\log(k).
    \end{equation*}
    To complete the proof of the lemma we use \eqref{overlineeq} and note that 
    \begin{align*}
        A_{k/2}(n)&=cn\prod_{p\mid n}\left(1+\frac{1}{p^2-p-1}\right)\prod_{q\mid k/2}\left(1-\frac{q-1}{q^2-q-1}\right)\\
        &>2cn\prod_{q\mid k/2}\left(1-\frac{q-1}{q^2-q-1}\right)
    \end{align*}
    since $n$ is even.
\end{proof}

\begin{theorem}\label{eventhm}
    Let $k$ be even and square-free with $2\leq k\leq 2\cdot 10^5$. Then for all even $n\geq 40$ we have $\overline{R}_k(n)>0$.
\end{theorem}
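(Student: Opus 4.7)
The strategy is to split the proof into two ranges of $n$ that together cover all even $n\geq 40$: an analytic range $n\geq 4\cdot 10^{18}$ where Lemma \ref{evencomp} applies directly, and a finite range $40\leq n<4\cdot 10^{18}$ which I would dispatch using the Goldbach verification of Oliveira e Silva, Herzog, and Pardi \cite{e2014empirical}. Between them these ranges exhaust even $n\geq 40$.

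In the analytic range, the plan is to fix some $C\in(0,1/2)$ (chosen close enough to $1/2$ that the $n^{-C}$ and $n^{-2C}$ terms are small at $n=4\cdot 10^{18}$, while keeping the Brun--Titchmarsh factor $(1+2C)/(1-2C)$ under control), apply Lemma \ref{evencomp}, and show that the right-hand side of \eqref{eveneq} is strictly positive. The dominant term $2c\prod_{q\mid k/2}\bigl(1-(q-1)/(q^2-q-1)\bigr)$ is bounded below by a positive constant depending only on $k$ (since $k$ has boundedly many prime factors), while the remaining error terms are all $o(1)$ as $n\to\infty$. Since these error contributions only shrink beyond the threshold, it suffices to verify positivity of \eqref{eveneq} at $n=4\cdot 10^{18}$ for each admissible even square-free $k\leq 2\cdot 10^5$. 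This reduces to a finite loop over $k$ that evaluates the nested sums over divisors $e\mid d\mid k/2$, using Bennett et al.'s constants $c_\theta(da^2/e)$ for the truncated sums over $a\leq\sqrt{10^5}\sqrt{e/d}$ and a tail estimate for $a>\sqrt{10^5}\sqrt{e/d}$ obtained from $\sum\mu^2(a)/\varphi(a^2)$.

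In the finite range, I would invoke the Goldbach verification to write $n=p+q$ with $p,q$ prime, and take $\eta=q$. Then $\eta$ is automatically square-free with $\eta>1$, and $(\eta,k)=1$ precisely when $q\nmid k$. When $n>4\cdot 10^5$, the larger prime satisfies $q\geq n/2>2\cdot 10^5\geq k$, so $q\nmid k$ holds for free, disposing of the bulk of this range in one line. For the residual window $40\leq n\leq 4\cdot 10^5$, a direct search is enough: for each such $n$ one lists its Goldbach decompositions and picks one whose larger prime avoids the (small) set of prime divisors of $k$. The existence of such a decomposition for every admissible $k$ is a finite verification.

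The main obstacle is the uniform analytic verification at $n=4\cdot 10^{18}$ across every admissible $k$. For large square-free $k$ the main-term product $\prod_{q\mid k/2}(1-(q-1)/(q^2-q-1))$ shrinks noticeably, so the positivity margin is narrow and the explicit constants $c_\theta(m)$ of Bennett et al.~\cite{bennett2018explicit} must be small enough for every modulus $m=da^2/e$ that arises. This range-of-validity constraint is precisely what dictates the restriction $k\leq 2\cdot 10^5$ in the theorem. The small-$n$ sweep is routine in comparison, requiring only a table of primes up to $4\cdot 10^5$ and elementary coprimality checks.
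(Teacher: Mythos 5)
Your proposal follows the paper's proof in both halves: the paper likewise applies Lemma \ref{evencomp} with a fixed $C$ (it takes $C=0.2$) and checks that the right-hand side of \eqref{eveneq} is positive for all $n\geq 4\cdot 10^{18}$ and every admissible $k$, and it handles even $40\leq n<4\cdot 10^{18}$ via the Goldbach verification \cite{e2014empirical} by taking $\eta$ to be a prime from a Goldbach partition. The one organizational difference is the finite check: instead of sweeping a residual window of $n$ for each $k$, the paper notes that a representation with prime $\eta$ can only fail when \emph{every} Goldbach partition of $n$ consists of two prime divisors of $k$, so the candidate exceptions are confined to the few values $n=q_1+q_2$ with $q_1,q_2\mid k$ (at most $21$ such sums, since $k\leq 2\cdot 10^5$ has at most six prime factors); checking these directly gives the single worst case $n=38$ at $k=24738$, whence the cutoff $n\geq 40$. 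Your shortcut for $n>4\cdot 10^5$ (the larger prime in any partition exceeds $k$ and so cannot divide it) is valid, but the remaining sweep over all pairs $(n,k)$ with $40\leq n\leq 4\cdot 10^5$ is computationally much heavier than the paper's reduction; note also that for $n<40$ the prime-$\eta$ search genuinely fails for some $k$ (e.g.\ $38=7+31=19+19$ with $7,19,31\mid 24738$), so your claimed ``finite verification'' succeeds only because the theorem starts at $n=40$.
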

\begin{proof}
    Set $C=0.2$. Then for any even $k$ with $2\leq k\leq 2 \cdot 10^5$, the right-hand side of \eqref{eveneq} is positive for $n\geq 4\cdot 10^{18}$. Hence, $\overline{R}_k(n)>0$ for all even $n\geq 4\cdot 10^{18}$ and each $k$ in our range. For even $2<n<4\cdot 10^{18}$, the Goldbach verification \cite{e2014empirical} implies that $\overline{R}_k(n)>0$ except possibly when $n=q_1+q_2$ where $q_1$ and $q_2$ are both prime divisors of $k$ (not necessarily distinct). For each $k$ in question, we checked all of these possible exceptions and found that the largest exception occured when $k=24738$ and $n=38$. 
\end{proof}
\begin{remark}
    A value for $c=0.37395...$ (to 45 decimal places) can be found in \cite{wrench1961evaluation}. Moreover, for computing 
    \begin{equation*}
        \frac{1}{\varphi\left(d/e\right)}\sum_{\substack{a>\sqrt{10^5}\sqrt{e/d}\\(a,d)=e}}\frac{\mu^2(a)}{\varphi(a^2)}=\frac{1}{\varphi\left(d/e\right)}\sum_{(a,d)=e}\frac{\mu^2(a)}{\varphi(a^2)}-\frac{1}{\varphi\left(d/e\right)}\sum_{\substack{a\leq\sqrt{10^5}\sqrt{e/d}\\(a,d)=e}}\frac{\mu^2(a)}{\varphi(a^2)}
    \end{equation*}
    we note that
    \begin{align*}
        \frac{1}{\varphi\left(d/e\right)}\sum_{(a,d)=e}\frac{\mu^2(a)}{\varphi(a^2)}&=\frac{1}{\varphi\left(d/e\right)\varphi(e^2)}\frac{\prod_p\left(1+\frac{1}{\varphi(p^2)}\right)}{\prod_{p\mid d}\left(1+\frac{1}{\varphi(p^2)}\right)}\\
        &=\frac{\varphi(d^2)/\varphi(e^2)}{\varphi(d/e)\prod_{p\mid d}(p^2-p+1)}\sum_a\frac{\mu^2(a)}{\varphi(a^2)}\\
        &=\frac{d/e}{\prod_{p\mid d}(p^2-p+1)}\sum_a\frac{\mu^2(a)}{\varphi(a^2)}
    \end{align*}
    and
    \begin{equation*}
        \sum_a\frac{\mu^2(a)}{\varphi(a^2)}=\prod_p\left(1+\frac{1}{p^2-p}\right)=\prod_p\frac{(1-p^{-2})^{-1}(1-p^{-3})^{-1}}{(1-p^{-6})^{-1}}=\frac{\zeta(2)\zeta(3)}{\zeta(6)}.
    \end{equation*}
\end{remark}

\begin{corollary}\label{evencor}
    Let $k$ be odd and square-free with $1\leq k\leq 10^5$. Then for all even $n\geq 40$ we have $\overline{R}_k(n)>0$.
\end{corollary}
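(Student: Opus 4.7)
The plan is to reduce this corollary to Theorem \ref{eventhm} by passing from $k$ to $2k$. Given an odd square-free $k$ with $1 \leq k \leq 10^5$, the integer $2k$ is even, square-free (since $2$ is coprime to the odd prime factors of $k$), and satisfies $2 \leq 2k \leq 2 \cdot 10^5$, so it lies in the range covered by Theorem \ref{eventhm}.

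Next I would observe the monotonicity of $\overline{R}_\cdot(n)$ in the modulus. Any square-free integer $\eta > 1$ with $(\eta, 2k) = 1$ is in particular coprime to $k$. Hence, term by term in the defining sum,
\[
    \overline{R}_{2k}(n) = \sum_{\substack{p \leq n \\ (n-p, 2k)=1 \\ p \neq n-1}} \mu^2(n-p)\log(p) \;\leq\; \sum_{\substack{p \leq n \\ (n-p, k)=1 \\ p \neq n-1}} \mu^2(n-p)\log(p) = \overline{R}_k(n).
\]
(No term on the left is counted with a negative sign, since $\mu^2 \geq 0$.) Therefore $\overline{R}_{2k}(n) > 0$ implies $\overline{R}_k(n) > 0$.

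Finally, I would apply Theorem \ref{eventhm} to $2k$: for every even $n \geq 40$, one has $\overline{R}_{2k}(n) > 0$, and the monotonicity above upgrades this to $\overline{R}_k(n) > 0$ for all even $n \geq 40$. The argument is entirely structural and involves no new computation; there is no real obstacle, since the heavy lifting (the explicit bounds and the Goldbach verification) has already been absorbed into Theorem \ref{eventhm}. The only minor point to verify is that $2k$ is genuinely square-free, which holds because $k$ is odd and square-free.
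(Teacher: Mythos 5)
Your proposal is correct and follows exactly the same route as the paper: pass to the even, square-free modulus $2k\leq 2\cdot 10^5$, apply Theorem \ref{eventhm}, and use the termwise monotonicity $\overline{R}_{2k}(n)>0\Rightarrow\overline{R}_k(n)>0$. The paper states this in one line; your write-up merely spells out the same argument in more detail.
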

\begin{proof}
    The result follows from Theorem 5.2 since $\overline{R}_{2k}(n)>0$ implies $\overline{R}_{k}(n)>0$. Note that $n=38$ is an exception for $k=24738/2=12369$.
\end{proof}
Together, Theorem \ref{eventhm} and Corollary \ref{evencor} give Theorem \ref{firsteventhm}.

We now list the exception sets $S_k$ for some specific values of $k$. In particular, we consider the primorials $p_m\#=\prod_{i=1}^m p_i$, where $p_i$ is the $i^{\text{th}}$ prime number. The primorials are of interest since every square-free number divides a sufficiently large primorial and for any even $k\mid p_m\#$, we have $S_k\subseteq S_{p_m\#}$. Our result for $13\#=30030$ is also that which is used in the follow up work \cite{johnstonsome22}.

\begin{table}[h]
\centering
\caption{Exception sets for the first 6 primorials.}
\begin{tabular}{|c|c|}
\hline
$k$ & $S_k$ \\
\hline
$ 2 $&$ \{2,4\} $\\
\hline
$ 6 $&$ \{2,4,6\} $\\
\hline
$ 30 $&$ \{2,4,6,8\} $\\
\hline
$ 210 $&$ \{2,4,6,8,10,12\} $\\
\hline
$ 2310 $&$ \{2,4,6,8,10,12,14\} $\\
\hline
$ 30030 $&$ \{2,4,6,8,10,12,14,16,18\} $\\
\hline
\end{tabular}
\label{eventable}
\end{table}

For other choices of $k$, the exception sets $S_k$ can be readily computed using Theorem \ref{eventhm} and Corollary \ref{evencor}. 

\subsection{Results for general $n$}
We now detail our computations and results for general $n$. In this case we only consider odd $k$ since there are infinitely many odd $n$ such that $R_k(n)=0$ (and thus $\overline{R}_k(n)=0$) if $k$ is even (see Section \ref{sectasym}). The case when $n$ is odd is generally more difficult than that when $n$ is even. As a result, the following results are weaker than those in Section \ref{evenresults}.

\begin{lemma}\label{oddcomp}
    Let $n\geq 8\cdot 10^9$ and $k$ be odd and square-free with $1\leq k\leq 10^5$. Then for any $C\in(0,1/2)$ with $n^C>\sqrt{10^5}$,
    \begin{align}\label{oddeq}
        \frac{\overline{R_k}(n)}{n}>c&\prod_{q\mid k}\left(1-\frac{q-1}{q^2-q-1}\right)-\frac{1}{\log(n)}\sum_{\substack{d\mid k}}\sum_{e\mid d}\sum_{\substack{a\leq\sqrt{10^5}\sqrt{e/d}\\(a,d)=e}}c_{\theta}\left(da^2/e\right)\mu^2(a)\notag\\
        &-\left(\frac{1+2C}{1-2C}\right)\sum_{d\mid k}\sum_{e\mid d}\frac{1}{\varphi\left(d/e\right)}\sum_{\substack{a>\sqrt{10^5}\sqrt{e/d}\\(a,d)=e}}\frac{\mu^2(a)}{\varphi(a^2)}\notag\\
        &-\log(n)\left(\sum_{d\mid k}\sum_{e\mid d}\left(n^{-\frac{1}{2}}\left(\frac{1}{e}-\frac{1}{d}\right)+\frac{1}{\sqrt{de}}n^{-C}+n^{-2C}\right)\right)\notag\\
        &-\frac{\log(k)}{n}-\frac{\log(n)}{n}.
    \end{align}
    Here, we take $c_{\theta}(1)$ to be $9.5913\cdot 10^{-4}$ as per \cite[Table 15]{broadbent2021sharper}.
\end{lemma}
\begin{proof}
    Analogous to that of Lemma \ref{evencomp}. 
\end{proof}

Since we cannot use the Goldbach verification \cite{e2014empirical} for odd $n$, the computations became very cumbersome unless further restrictions were placed on $k$. For this reason, we only obtained a completely explicit result for odd $k$ with at most 2 prime factors.
\begin{theorem}[Equivalent to Theorem \ref{firstoddthm}] \label{oddthm}
    Let $k$ be odd with at most two prime factors and $1\leq k\leq 10^5$. Then for all $n \geq 36$, we have $\overline{R}_k(n)>0$.
\end{theorem}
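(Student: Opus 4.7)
The plan mirrors the strategy of Theorem \ref{eventhm}, but without the luxury of the Goldbach verification \cite{e2014empirical}: for each admissible $k$, I would first use Lemma \ref{oddcomp} to produce an explicit threshold $N_0(k)$ beyond which $\overline{R}_k(n)>0$, and then directly verify positivity of $\overline{R}_k(n)$ for $36 \leq n < N_0(k)$ by exhaustive search.

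First, I would enumerate the odd square-free $k$ with $1\le k\le 10^5$ having at most two prime factors; this restriction ensures the divisor sums $\sum_{d\mid k}\sum_{e\mid d}$ appearing in \eqref{oddeq} contain at most nine $(d,e)$ pairs, keeping both the magnitude of the error terms and the downstream computation manageable. Fixing, for example, $C=0.2$, I would evaluate the right-hand side of \eqref{oddeq} as a function of $n$ for each such $k$ and solve for the smallest $N_0(k)$ above which it is positive. The dominant positive term is the constant $c\prod_{q\mid k}(1-(q-1)/(q^2-q-1))$, which stays bounded away from $0$, while the $1/\log(n)$ contribution (sensitive to the values of $c_\theta(da^2/e)$) and the $\log(n)\cdot n^{-2C}$ terms decay slowly, so $N_0(k)$ will typically sit somewhere around $10^{10}$ to $10^{12}$; this is consistent with Lemma \ref{oddcomp} being stated from $n\ge 8\cdot 10^9$.

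Second, for $36\le n<N_0(k)$ I would directly verify $\overline{R}_k(n)>0$ by searching, for each such $n$, for a prime $p\le n-2$ with $n-p>1$, $n-p$ square-free, and $(n-p,k)=1$. Efficiency is crucial: one sieves the square-free integers and primes up to $N_0(k)$ once, then for each $n$ looks for a small prime $p$ (starting from $p=2,3,5,\ldots$) such that $n-p$ passes the square-free and coprimality tests. Representations tend to be abundant, so a short search suffices per $n$.

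The main obstacle is the computational cost of the second step in the odd-$n$ regime, where no Goldbach-style shortcut is available. This is precisely why the hypothesis $\omega(k)\le 2$ is imposed: relaxing it would both inflate $N_0(k)$ (through more terms in \eqref{oddeq} and weaker $c_\theta(da^2/e)$ contributions) and enlarge the set of $k$ requiring verification, pushing the total work beyond practical limits. Provided the verification completes for every admissible $k$ (with the threshold $n\ge 36$ coming out as the largest exception encountered), the two steps combine to yield the theorem.
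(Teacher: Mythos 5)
The two-step structure you propose (analytic bound from Lemma \ref{oddcomp} for large $n$, exhaustive verification below the threshold) is the same as the paper's, but your verification step has a gap that makes it infeasible as stated, and it misses the one idea that makes the paper's computation work. You propose to verify $\overline{R}_k(n)>0$ separately for each admissible $k$, sweeping $n$ up to $N_0(k)\approx 10^{10}$. There are tens of thousands of odd square-free $k\le 10^5$ with $\omega(k)\le 2$, so this multiplies an already heavy computation (the paper's single sweep up to $8\cdot 10^9$ took nearly three hours on 192 cores) by a factor of order $10^4$, which is not practical. The paper avoids this with a pigeonhole trick: for each $n$ it finds \emph{three} representations $n=p_i+\eta_i$ with $\eta_i$ square-free and pairwise $\gcd(\eta_i,\eta_j)\le 2$. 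If $k$ is odd with at most two prime factors and none of the $\eta_i$ were coprime to $k$, two of them would share an odd prime divisor of $k$, contradicting the pairwise gcd condition; hence one representation works for \emph{every} admissible $k$ simultaneously, and a single sweep over $n$ suffices. This is also the real reason the hypothesis $\omega(k)\le 2$ is imposed --- not, as you suggest, to keep the divisor sums in \eqref{oddeq} short or the list of $k$ manageable.

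Two smaller issues. First, your choice $C=0.2$ is incompatible with the stated threshold: Lemma \ref{oddcomp} requires $n^C>\sqrt{10^5}$, and $(8\cdot 10^9)^{0.2}\approx 96<\sqrt{10^5}\approx 316$, so with $C=0.2$ the analytic bound only kicks in around $n\ge 10^{12.5}$, inflating the verification range by a further factor of several hundred; the paper takes $C=0.37$ precisely so that the crossover happens at $8\cdot 10^9$. Second, searching over \emph{small} primes $p$ forces you to test square-freeness of $n-p\approx n$ up to $N_0(k)$; the paper instead uses the largest primes below $n$ (drawn from the preceding interval), so that $n-p\le 1.6\cdot 10^8$ and a single precomputed table of square-free numbers of that size suffices.
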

To prove this result, first note that by choosing $C = 0.37$ and applying Lemma \ref{oddcomp}, the right-hand side of (\ref{oddeq}) is positive for $n \ge 8 \cdot 10^9$ and our choices of $k$. For even $40\leq n\leq 8\cdot 10^9$, the result follows from Corollary \ref{evencor}. The cases $n=36$ and $n=38$ are easily verified by hand. For instance, three representations of $36$ as the sum of a prime and a squarefree number are 
\begin{align*}
    36 &= 2 + 34, \\
    36 &= 3 + 33, \\
    36 &= 5 + 31.
\end{align*}
Since $31$, $33$ and $34$ are all mutually coprime, at least one these square-free numbers will be coprime to $k$, since $k$ has at most two prime factors. For odd $n \le 8\cdot10^9$, we verify using Python computations\footnote{See the file odd\_odd\_computations.py in \url{https://bit.ly/3m9QYlk}.}.

We employ an algorithm similar to \cite{francis2020additive}. We first compute the set $S$ of square-free numbers less than or equal to $1.6\cdot10^8$, excluding 1. We have $\overline{R}_k(n)>0$ (when $k$ has at most two prime factors) if we can find three distinct representations of $n$ as a sum of a prime and a square-free number, i.e.
\begin{align*}
    n &= p_1 + \eta_1, \\
    n &= p_2 + \eta_2, \\
    n &= p_3 + \eta_3,
\end{align*}
with $(\eta_1,\eta_2) \le 2$, $(\eta_1,\eta_3) \le 2$, and $(\eta_2,\eta_3) \le 2$. In order to do so, we divide the numbers $601 \le n \le 8\cdot10^9$ into 100 intervals $I_{1 \le \ell \le 100}$, roughly of the size $8\cdot10^7$. For each interval $I_{\ell}$, we compute the 100 largest primes $p_1 \le p_2 \le \ldots \le p_{100}$ in the interval $I_{\ell-1}$. For $I_1$, this would be the 100 largest primes $\le 600$. Subsequently, we check if $n-p_j$, starting from $j = 1$, is in $S$. Once we find at least three representations, we check if there is a combination of three square-free numbers that satisfies the above-mentioned property of their pairwise greatest common divisor being less than or equal to 2. We continue until such a triplet is found. For all $n \ge 601$, at least three such representations were found. This computation was done on Gadi, an HPC cluster at NCI Australia using 192 cores of Intel Xeon Cascade Lake processors and utilising 19.19 GB of memory (RAM). It took approximately 2 hours 53 minutes for the computations to terminate. For $n \le 600$, a brute-force algorithm was employed and the largest exception found was $n = 35$ for $k = 33$.

\begin{remark}
    The computations for $k$ prime were also performed by Francis and Lee \cite{francis2020additive}. However, we thought it would be instructive to reproduce their result.
\end{remark}

If $k$ has more than 2 prime factors we can still obtain an explicit result, albeit for much larger $n$.
\begin{theorem}\label{bigoddthm}
    Let $k$ be odd and square-free with $1\leq k\leq 10^5$. Then for all $n\geq 10^{25}$, we have $\overline{R}_k(n)>0$.
\end{theorem}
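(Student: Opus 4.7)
The plan is to apply Lemma \ref{oddcomp} with $n=10^{25}$ to each odd square-free $k\leq 10^5$ and verify that the right-hand side of \eqref{oddeq} is positive. Because every $n$-dependent term appearing there is nonincreasing in $n$ once $n\geq 8\cdot 10^9$, while the main term $c\prod_{q\mid k}\!\left(1-\frac{q-1}{q^2-q-1}\right)$ is independent of $n$, positivity at the threshold $n=10^{25}$ immediately yields positivity for all $n\geq 10^{25}$.

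The only free parameter is $C\in(0,1/2)$. Since $n\geq 10^{25}$ is enormous, one can take $C$ close to $1/2$ to annihilate the $n^{-1/2}$, $n^{-C}\log n$, and $n^{-2C}\log n$ error terms, provided one stops short of pushing the Brun--Titchmarsh prefactor $(1+2C)/(1-2C)$ beyond what the main term can absorb. A value around $C=0.45$ (prefactor $19$) is a reasonable first try, tuned per $k$ if necessary. The truncation $N=10^5$ built into the lemma is calibrated so that every modulus $da^2/e\leq N$ arising in the finite $c_\theta$-sum lies in the range tabulated in \cite{bennett2018explicit}, and at $n=10^{25}$ Bennett's applicability hypothesis is trivially satisfied for every such modulus.

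With $C$ fixed, there are two substantive sums to compute for each $k$. The $c_\theta$ contribution is a triple sum over $e\mid d\mid k$ and $a\leq\sqrt{10^5}\sqrt{e/d}$ with $(a,d)=e$, each $c_\theta(da^2/e)$ read directly from \cite{bennett2018explicit}. The Brun--Titchmarsh tail is handled by subtracting a finite truncation from the closed-form total given by the Remark,
\begin{equation*}
\frac{1}{\varphi(d/e)}\sum_{(a,d)=e}\frac{\mu^2(a)}{\varphi(a^2)}=\frac{d/e}{\prod_{p\mid d}(p^2-p+1)}\cdot\frac{\zeta(2)\zeta(3)}{\zeta(6)}.
\end{equation*}
With both sums in hand the verification is a straightforward loop over the roughly $3\cdot 10^4$ odd square-free $k\leq 10^5$.

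The hard part is the worst-case $k$, namely those maximising the number of small odd prime factors such as the odd primorials $3\cdot 5\cdot 7\cdot 11\cdot 13\cdot 17=15015$. For such $k$ the main term $c\prod_{q\mid k}\bigl(1-(q-1)/(q^2-q-1)\bigr)$ is smallest, while the divisor double sum over $e\mid d\mid k$ in the Brun--Titchmarsh contribution contains the most terms, so the inequality is tightest there. If $C=0.45$ fails on one of these $k$, I would lower $C$ (say to $0.4$ or $0.35$) to shrink the Brun--Titchmarsh prefactor; the resulting enlargement of the $n^{-C}\log n$ and $n^{-2C}\log n$ terms remains utterly negligible at $n=10^{25}$ even for $C$ as small as $0.3$, so some value of $C$ will always work.
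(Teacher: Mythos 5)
Your proposal is correct and follows essentially the same route as the paper: apply Lemma \ref{oddcomp} for each odd square-free $k\leq 10^5$, pick a suitable $C$, and check positivity of the right-hand side of \eqref{oddeq} at the threshold (the paper takes $C=0.2$ rather than a value near $1/2$, and additionally lowers $c_{\theta}(1)$ to $6.3417\cdot 10^{-9}$ since $n\geq 10^{25}$, but these are tuning details of the same argument). Your monotonicity observation justifying verification only at $n=10^{25}$ is sound.
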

\begin{proof}
    In reference to Lemma \ref{oddcomp}, we choose $C = 0.2$. The value of $c_{\theta}(1)$ can also be reduced to $6.3417\cdot 10^{-9}$ since $n\geq 10^{25}$. With these parameters, the right-hand side of \eqref{oddeq} is positive for all $n\geq 10^{25}$ as required.  
\end{proof}
Presumably Theorem \ref{bigoddthm} holds for smaller values of $n$, however, significant computations would be required to find representations for each $n<10^{25}$.

\section{Further conditions on the square-free number}\label{sectext}
Our results so far have been concerned with the function $R_k(n)$ counting representations of $n$ as the sum of a prime and a square-free number $\eta$ with $(\eta,k)=1$. More generally though, we can consider the function
\begin{equation}\label{rkldef}
    R_k^\ell(n):=\sum_{\substack{p\leq n\\(n-p,k)=1\\ p\equiv n\:(\ell)}}\mu^2(n-p)\log(p).
\end{equation}
In particular, $R_k^\ell(n)$ is the weighted number of representations of $n$ as the sum of a prime $p$ and a square-free number $\eta$ with $(\eta,k)=1$ \textbf{and} $\ell\mid\eta$. One should take $(k,\ell)=1$ otherwise $R_k^\ell(n)$ would be zero for all $n$. Using our bounds from Section \ref{sectbounds}, we can also obtain bounds and asymptotics for $R_k^\ell(n)$.
\begin{proposition}\label{propklbounds}
    Let $\ell$ and $k$ be positive square-free numbers with $(\ell,k)=1$. Define
    \begin{align*}
        A_k^\ell(n)&:=A_k(n)\prod_{r\mid\ell}\frac{r-1}{r^2-r-1}\\
        &=cn\prod_{p\mid n}\left(1+\frac{1}{p^2-p-1}\right)\prod_{q\mid k}\left(1-\frac{q-1}{q^2-q-1}\right)\prod_{r\mid\ell}\frac{r-1}{r^2-r-1}
    \end{align*}
    where $p$, $q$ and $r$ are primes. Keeping the notation of Lemma \ref{delemma}, if $(\ell,n)=1$, $k_n=k/(k,n)$ and $n\geq n_0$ for each choice of $e$ and $\ell d$ with $e\mid d\mid k_n$, then
    \begin{equation}\label{rklbounds}
        \left|R_k^\ell(n)-A_{k_n}^\ell(n)\right|<\sum_{d\mid k_n}\sum_{e\mid \ell d}E_{\ell d,e}(n)-\log((k,n)).
    \end{equation}
\end{proposition}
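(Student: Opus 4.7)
The plan is to adapt the inclusion-exclusion argument of Section \ref{sectnot} and the bounds of Lemma \ref{delemma} by replacing the modulus $d$ throughout with $\ell d$, exploiting the coprimality $(\ell,k)=1$.

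First, I would expand $R_k^\ell(n)$ by applying $\mu^2(n-p)=\sum_{a^2\mid n-p}\mu(a)$ together with Möbius inversion on the condition $(n-p,k)=1$. The congruence $p\equiv n\pmod\ell$ (equivalent to $\ell\mid n-p$) combines via the Chinese remainder theorem with $p\equiv n\pmod d$ to give $p\equiv n\pmod{\ell d}$, since $(\ell,d)=1$ whenever $d\mid k$. Further combining with $p\equiv n\pmod{a^2}$ and grouping by $(a,\ell d)=e$ for $e\mid \ell d$ yields
\begin{equation*}
    R_k^\ell(n)=\sum_{d\mid k}\mu(d)\sum_{e\mid \ell d}R_{\ell d,e}(n),
\end{equation*}
in direct analogy with \eqref{rksimpeq}.

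Second, I would invoke Lemma \ref{delemma} with parameters $(\ell d,e)$ in place of $(d,e)$. The hypothesis $(\ell d,n)=1$ is guaranteed by the assumption $(\ell,n)=1$ together with $d\mid k_n$, exactly as in Proposition \ref{propnocoprime}. This gives $|R_{\ell d,e}(n)-A_{\ell d,e}(n)|<E_{\ell d,e}(n)$ for each relevant pair $(d,e)$.

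Third — the key algebraic step — I would check that the main terms assemble correctly:
\begin{equation*}
    \sum_{d\mid k_n}\mu(d)\sum_{e\mid \ell d}A_{\ell d,e}(n)=A_{k_n}^\ell(n).
\end{equation*}
Since $(\ell,k_n)=1$, the defining product for $A_{\ell d,e}$ factors multiplicatively over primes dividing $\ell$ versus those dividing $d$. The contribution from the primes $r\mid\ell$ is fixed across all $d$ and produces the extra factor $\prod_{r\mid\ell}\tfrac{r-1}{r^2-r-1}$, while the Möbius sum over $d\mid k_n$ of the remaining factors yields $\prod_{q\mid k_n}\bigl(1-\tfrac{q-1}{q^2-q-1}\bigr)$ exactly as in the derivation of $A_k(n)$ in Proposition \ref{propcoprime}. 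Finally, the passage from the coprime case $(k,n)=1$ to general $n$ follows Proposition \ref{propnocoprime} verbatim: bound $R_k^\ell(n)$ above by $R_{k_n}^\ell(n)$ and below by $R_{k_n}^\ell(n)-\sum_{p\mid(k,n)}\log(p)$, producing the $\log((k,n))$ correction in \eqref{rklbounds}.

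The main obstacle is the third step, namely the bookkeeping needed to factor the Euler product across the $\ell$ and $k_n$ parts and to verify that the $\mu(e)$ and $\mu(d)$ weights telescope as in the original computation; everything else is a mechanical transposition of the earlier arguments with $\ell d$ replacing $d$.
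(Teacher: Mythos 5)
Your proposal is correct and follows essentially the same route as the paper: the identity $R_k^\ell(n)=\sum_{d\mid k}\mu(d)\sum_{e\mid\ell d}R_{\ell d,e}(n)$, an application of Lemma \ref{delemma} with modulus $\ell d$, the assembly of the main terms into $A_k^\ell(n)$, and the reduction to $k_n$ exactly as in Proposition \ref{propnocoprime}. The only discrepancy is that your argument (like that of Proposition \ref{propnocoprime}) yields $+\log((k,n))$ rather than the $-\log((k,n))$ printed in \eqref{rklbounds}, which appears to be a sign typo in the statement rather than a flaw in your reasoning.
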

\begin{proof}
    We assume $(k,n)=1$ and argue similarly to the proof of Proposition \ref{propcoprime}. The argument for $(k,n)>1$ is the same as in the proof of Proposition \ref{propnocoprime}.
    
    So, let $(k,n)=1$ thereby reducing \eqref{rklbounds} to
    \begin{equation*}
        \left|R_k^\ell(n)-A_k^\ell(n)\right|<\sum_{d\mid k}\sum_{e\mid \ell d}E_{\ell d,e}(n).
    \end{equation*}
    Define $E_k^\ell(n):=\sum_{d\mid k}\sum_{e\mid \ell d}E_{\ell d,e}(n)$. We will only prove $R_k^\ell(n)>A_k^\ell(n)-E_k^\ell(n)$ since the other inequality $R_k^\ell(n)<A_k^\ell(n)+E_k^\ell(n)$ follows from almost identical reasoning. Now,
    \begin{align*}
        R_k^\ell(n)&=R_1^\ell(n)-\sum_{\substack{p\leq n\\(n-p,k)>1\\p\equiv n\: (\ell)}}\mu^2(n-p)\log(p)\\
        &=\sum_{d\mid k}\mu(d)\sum_{\substack{p\leq n\\p\equiv n\:(\ell d)}}\mu^2(n-p)\log(p)\\
        &=\sum_{d\mid k}\mu(d)\sum_{e\mid \ell d}R_{\ell d,e}(n)\\
        &>\sum_{d\mid k}\mu(d)\sum_{e\mid \ell d}A_{\ell d,e}(n)-\sum_{d\mid k}\sum_{e\mid \ell d}E_{\ell d,e}(n)\\
        &=cn\prod_{p\mid n}\left(1+\frac{1}{p^2-p-1}\right)\prod_{q\mid k}\left(1-\frac{\varphi(q)}{q^2-q-1}\right)\prod_{r\mid\ell}\frac{\varphi(r)}{r^2-r-1}-E_k^\ell(n)\\
        &=A_k^\ell(n)-E_k^\ell(n)
    \end{align*}
    as required.
\end{proof}
\begin{theorem}\label{asympklthm}
    Keep the notation from Proposition \ref{propklbounds} and let $(n,\ell)=1$. If $k$ is odd,
    \begin{equation*}
        \frac{R_k^\ell(n)}{n}\sim c\prod_{p\mid n}\left(1+\frac{1}{p^2-p-1}\right)\prod_{q\mid k_n}\left(1-\frac{q-1}{q^2-q-1}\right)\prod_{r\mid\ell}\frac{r-1}{r^2-r-1}>0.
    \end{equation*}
    And if $k$ is even
    \begin{equation*}
        \frac{R_k^\ell(2n)}{2n}\sim c\prod_{p\mid 2n}\left(1+\frac{1}{p^2-p-1}\right)\prod_{q\mid k_{2n}}\left(1-\frac{q-1}{q^2-q-1}\right)\prod_{r\mid\ell}\frac{r-1}{r^2-r-1}>0.
    \end{equation*}
\end{theorem}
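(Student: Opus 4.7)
The plan is to mimic the proof of Theorem \ref{asympthm}, now invoking Proposition \ref{propklbounds} in place of Proposition \ref{propnocoprime}. The only new structural ingredient is the additional factor $P_\ell := \prod_{r\mid\ell}(r-1)/(r^2-r-1)$ appearing in the main term, which needs to be tracked but causes no essential difficulty since $\ell$ is fixed and coprime to $k$.

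First, I would establish that $A_{k_n}^\ell(n) = A_{k_n}(n) \cdot P_\ell$ satisfies $C_1 n < A_{k_n}^\ell(n) < C_2 n$ for some positive constants $C_1, C_2$ (and analogously for $A_{k_{2n}}^\ell(2n)$ in the even-$k$ case). The factor $\prod_{p\mid n}(1+1/(p^2-p-1))$ lies in $[1, K]$ for some absolute $K$, exactly as in \eqref{nbounds}. The factor $\prod_{q\mid k_n}(1-(q-1)/(q^2-q-1))$ is bounded below by the positive quantity $\prod_{q\mid k}(1-(q-1)/(q^2-q-1))$; this is strictly positive when $k$ is odd since then every prime divisor $q$ of $k$ satisfies $q \geq 3$ and hence $0 < (q-1)/(q^2-q-1) < 1$. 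Finally $P_\ell > 0$ is a constant depending only on the fixed square-free $\ell$. These give the desired two-sided bound on $A_{k_n}^\ell(n)/n$, so it remains to show $|R_k^\ell(n) - A_{k_n}^\ell(n)|/n \to 0$ as $n\to\infty$.

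For the error, Proposition \ref{propklbounds} controls $|R_k^\ell(n) - A_{k_n}^\ell(n)|$ by $\sum_{d\mid k_n}\sum_{e\mid\ell d} E_{\ell d,e}(n)$ together with a term of size at most $\log(k) = o(n)$. The outer double sum ranges over finitely many pairs $(d,e)$, so it suffices to show that $E_{\ell d,e}(n)/n$ can be made arbitrarily small. Inspecting the definition from Lemma \ref{delemma}, I would fix $C \in (0, 1/2)$, then choose $N$ large enough that the Brun--Titchmarsh contribution governed by the convergent tail $\sum_{a > \sqrt{N}\sqrt{e/(\ell d)}} \mu^2(a)/\varphi(a^2)$ is below any prescribed $\varepsilon$; the remaining terms of $E_{\ell d,e}(n)/n$ are each $o(1)$ as $n \to \infty$. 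The even-$k$ case follows identically with $n$ replaced by $2n$, noting that the condition $(2n,\ell)=1$ is automatic: any representation contributing to $R_k^\ell(2n)$ requires $\eta$ coprime to the even $k$, hence $\eta$ odd and thus $\ell$ odd, so $(2n,\ell)=(n,\ell)=1$. The only point requiring care is verifying that the new factor $P_\ell$ does not spoil positivity of the lower constant $C_1$, which is immediate from $P_\ell > 0$; beyond this, the proof is essentially bookkeeping on top of the argument for Theorem \ref{asympthm}.
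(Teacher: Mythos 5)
Your proposal is correct and follows exactly the route the paper takes: the paper's proof of this theorem is literally the one-line instruction to repeat the argument of Theorem \ref{asympthm} with Proposition \ref{propklbounds} in place of Proposition \ref{propnocoprime}, which is what you carry out in detail (the constant factor $\prod_{r\mid\ell}(r-1)/(r^2-r-1)$ being positive and fixed, the two-sided linear bounds on the main term, and the $N$-then-$n$ limiting argument for $E_{\ell d,e}(n)/n$). Your added observation that $\ell$ must be odd when $k$ is even is a harmless extra check not present in the paper.
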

\begin{proof}
    Repeat the argument given in Theorem \ref{asympthm}, using Proposition \ref{propklbounds} as opposed to Proposition \ref{propnocoprime}.
\end{proof}
\begin{remark}
    In both Proposition \ref{propklbounds} and Theorem \ref{asympklthm} we required that $n$ was coprime to $\ell$. To illustrate why this is necessary, suppose that $(n,\ell)>1$. We are then looking at representations $n=p+\eta$ where $p$ is prime and $\eta$ is a square-free number divisible by $\ell$. If any representation exists, we require $p=(n,\ell)$ since $p=n-\eta$. As a result $\eta=n-(n,\ell)$. But for any $\ell$ there are infinitely many $n$ with $(n,\ell)>1$ and $n-(n,\ell)$ not square-free.
\end{remark}

One could use Proposition \ref{propklbounds} to obtain results similar to Theorem \ref{kthm} for other Goldbach-like problems. For instance, Li \cite[Theorem 1]{li2019representation} showed that every sufficiently large odd integer can be written as the sum of a prime and twice the product of at most 2 distinct primes. If Li's result were made explicit, one could repeat our computations in Section \ref{sectcomp} for $R_k^2(n)$, and use the argument from \cite[\S 10]{BJV22} to obtain a version of Theorem \ref{kthm} for odd numbers.

\section*{Acknowledgements}
We thank Matteo Bordignon, Forrest Francis, Ethan Lee and our supervisor Tim Trudgian for their helpful conversations. We also thank National Computational Infrastructure (NCI) Australia which is supported by the Australian Government and UNSW Canberra for computational resources.

\printbibliography
\end{document}